\newcommand{\bb}{\mathbb}
\newcommand{\vphi}{\varphi}
\newcommand{\Res}{\operatorname{Res}}
\newcommand{\res}{\operatorname{res}}
\newcommand{\fra}{\mathfrak}
\newcommand{\ovl}{\overline}
\newcommand{\berkP}{\mathbb{P}^{1,an}}
\newcommand{\berkA}{\mathbb{A}^{1,an}}
\newcommand{\cal}{\mathcal}
\newcommand{\Spec}{\operatorname{Spec}}
\newcommand{\eps}{\varepsilon}
\newcommand{\h}{\widehat{h}}
\newcommand{\Rat}{\operatorname{Rat}}
\newcommand{\GL}{\operatorname{GL}}
\newcommand{\rat}{\operatorname{rat}}
\newcommand{\SL}{\operatorname{SL}}
\newcommand{\Ht}{\widehat{H}}
\newcommand{\Gauss}{\operatorname{Gauss}}
\newtheorem{theorem}{Theorem}
\numberwithin{theorem}{section}
\newtheorem{proposition}[theorem]{Proposition}
\numberwithin{example}{section}
\newtheorem{definition}{Definition}
\numberwithin{definition}{section}
\title{On the Number of Small Points for Rational Maps}
\author{Jit Wu Yap}
\begin{document}

\begin{abstract}
Let $K$ be a number field and $f: \bb{P}^1 \to \bb{P}^1$ a rational map of degree $d \geq 2$ with at most $s$ places of bad reduction, where we include all archimedean places. We prove that there exists constants $c_1,c_2 > 0$, depending only on $d$ and not on $f$ or $K$, such that 
$$
\# \left\{ x \in \bb{P}^1(K) \mid \h_f(x) \leq \frac{c_1}{s} h_{\rat_d}(\langle f \rangle) \right\} \leq c_2 s \log(s).
$$
Here, $\rat_d$ is the moduli space of rational maps up to conjugacy, $h_{\rat_d}$ is an ample height and $\langle f \rangle$ is the equivalence class associated to $f$. This gives a uniform version of Baker \cite[Theorem 1.14]{Bak06} as well as generalizing the results of Benedetto \cite{Ben07} and Looper \cite{Loo19} from polynomials to rational maps. The main tool used is the degeneration of sequences of rational maps by Luo \cite{Luo21, Luo22} which has been recently formalized by Favre--Gong \cite{FG24} via Berkovich spaces.
\end{abstract}

\maketitle

\section{Introduction}
Let $K$ be a number field and let $f: \bb{P}_K^1 \to \bb{P}_K^1$ be a rational map of degree $d \geq 2$. Let $\h_f$ denote the canonical height of $f$. Then for any finite extension $L/K$ of degree $D$, Baker \cite{Bak06} proves that there exists constants $A,B > 0$, depending on $f$, such that 
\begin{equation} \label{eq: IntroBaker1}
\# \left\{ x \in \bb{P}^1(L) \mid \h_f(x) \leq  \frac{A}{D} \right\} \leq BD \log D.
\end{equation}
For any positive integer $s \geq 1$ and $d \geq 2$, we prove a uniform version of \eqref{eq: IntroBaker1} when $f$ is of degree $d$ and has at most $s$ places of bad reduction. We let $\Rat_d$ be the moduli space of degree $d$ rational maps and $\rat_d$ be the moduli space of degree $d$ rational maps up to conjugation by $\GL_2$. Given $f \in \Rat_d(K)$ for a field $K$, we will let $\langle f \rangle$ denote its conjugacy class in $\rat_d(K)$. Let $h_{\rat_d}$ be a height function for $\rat_d$ coming from an ample line bundle that we shall fix once and for all. Given a place $v \in M_K$, we say that $f$ has bad reduction over $v$ if $v$ is archimedean, or $v$ is non-archimedean and no conjugate of $f$ extends to an endomorphism over $\bb{P}^1_{O_{K_v}}$, where $O_{K_v} = \{x \in K \mid |x|_v \leq 1\}$ is the valuation ring.

\begin{theorem} \label{IntroUniformBaker1}
Let $K$ be a number field and $f: \bb{P}_K^1 \to \bb{P}_K^1$ a rational map of degree $d \geq 2$ with $s$ places of bad reduction, where we include all archimedean places. There exists constants $c_1,c_2 > 0$, depending only on $d$ and independent of $f$ and $K$, such that 
$$
\# \left\{ x \in \bb{P}^1(K) \mid \h_f(x) \leq \frac{c_1}{s} h_{\rat_d}(\langle f \rangle) \right\} \leq c_2 s \log(s).
$$
\end{theorem}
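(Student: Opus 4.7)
The plan is to prove Theorem \ref{IntroUniformBaker1} via an adelic Fekete--Szego / energy argument on the Berkovich projective line in the spirit of Baker \cite{Bak06}, but with the $f$-dependent constants made uniform through the Berkovich compactification of $\rat_d$ provided by Luo \cite{Luo21, Luo22} and Favre--Gong \cite{FG24}.

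First, I would decompose the canonical height adelically as $\h_f(x) = \sum_{v \in M_K} n_v \lambda_{f,v}(x)$, where $\lambda_{f,v}$ is a local canonical height on $\berkP_{\bb{C}_v}$ attached to the canonical measure $\mu_{f,v}$. At every place $v$ of good reduction, $\mu_{f,v}$ is the Dirac mass at the Gauss point and its Robin / Fekete constant vanishes, so good places contribute trivially in what follows. Given a Galois-stable set $X \subset \bb{P}^1(\ovl{K})$ of cardinality $N$ all of whose elements satisfy $\h_f(x) \leq \frac{c_1}{s} h_{\rat_d}(\langle f \rangle)$, I would apply an adelic Fekete--Szego inequality to $X$: schematically, the pairwise energy of the empirical measure $\mu_X = \frac{1}{N}\sum_{x \in X} \delta_x$ at each place $v$ is controlled from above by the average canonical height (with an $O(\log N / N)$ error from the removed diagonal) and from below by the Robin constant of $\mu_{f,v}$, producing, after adelically summing, a single inequality that links $N$, the average height, and the sum of local Robin constants.

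Next, I would establish the key uniform bound: at each bad reduction place $v$, the local Robin constant of $\mu_{f,v}$ is controlled by the local component $h_{\rat_d, v}(\langle f \rangle)$ of the moduli height, up to additive constants depending only on $d$. Summing over the $s$ bad places yields a global bound of order $h_{\rat_d}(\langle f \rangle)$. Substituting the small-point hypothesis and rearranging the adelic Fekete--Szego inequality then gives, after choosing $c_1$ sufficiently small, $\log N \lesssim \log s + O(1)$ amplified by a factor of $s$, i.e.\ $N \leq c_2 s \log s$.

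The main technical obstacle is precisely the uniform local capacity bound at bad reduction places: the $f$-dependence of $A$ in Baker's \eqref{eq: IntroBaker1} arises from uncontrolled growth of local Robin constants, and Favre--Gong's Berkovich compactification is exactly what enables one to exchange that $f$-dependence for a dependence on $h_{\rat_d}(\langle f \rangle)$. Concretely, as one approaches the boundary of $\rat_d$, the Berkovich canonical measures degenerate onto a controlled finite skeleton whose Robin constant one can estimate by the local reduction complexity, and making this bound quantitative and uniform in $d$ alone is where the bulk of the analytic work lies. A secondary difficulty, inherited from the classical argument, is handling coincidences among small points at individual places, which I expect to resolve by a pigeonhole argument extracting a subset of $X$ of cardinality comparable to $N$ with pairwise distinct Berkovich reductions at the relevant places.
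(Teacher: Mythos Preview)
Your proposal has a genuine gap in the mechanism that produces the bound. In the normalization used here (and in Baker \cite{Bak06}), the dynamical Arakelov--Green function satisfies $\iint g_{f,v}\,d\mu_{f,v}\,d\mu_{f,v}=0$, so the ``Robin constant of $\mu_{f,v}$'' is identically zero and cannot be what you bound by the local moduli height. A bare Fekete--Szeg\H{o} inequality only yields $\sum_{v}\sum_{i\neq j} N_v g_{f,v}(z_i,z_j)\geq -O(M\log M)\cdot h_{\res}(f)$, while the adelic identity gives the same sum equal to $2(M-1)\sum_i \h_f(z_i)\geq 0$. These two inequalities are compatible for every $M$, so no bound on $M$ falls out; the argument as you sketch it simply does not close.

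What the paper actually does is extract a \emph{strictly positive} contribution of order $M^2$ at a single place. The key input is Theorem~\ref{IntroUniformBaker3}: there is an integer $N$ and $\delta>0$, depending only on $d$, and an open cover $U_1,\dots,U_N$ of $\berkP_{\bb{C}_v}$ with $g_{f,v}(x,y)>\delta(-\log|\res(f)|_v)$ for $x,y$ in the same $U_i$. One then chooses a bad place $w$ with $-N_w\log|\res(f)|_w\geq h_{\res}(f)/s$ and pigeonholes $M\asymp N^{-1}\cdot(\text{number of small points})$ of the points into a \emph{common} $U_i$ (the opposite of your ``pairwise distinct reductions''), yielding $\sum_{i\neq j} N_w g_{f,w}(z_i,z_j)\gtrsim \delta M^2 h_{\res}(f)/s$. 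Balanced against the $-O(M\log M)h_{\res}(f)$ from the remaining places (Proposition~\ref{ArakelovSumLowerBound2}) and the upper bound $c_1 M^2 h_{\res}(f)/s$, this forces $M=O(s\log s)$. The Favre--Gong/Luo degeneration is used not to control a capacity but to prove Theorem~\ref{IntroUniformBaker3}: one assumes the covering fails along a sequence $(f_n)$, passes to an ultralimit $f_\omega$ over the non-archimedean field $\cal{H}(\omega)$ where $f_\omega$ has bad reduction, invokes Baker's $g_{f_\omega}(\zeta,\zeta)>0$ on all of $\berkP_{\cal{H}(\omega)}$, and spreads this back via lower semicontinuity of $g_f$ on $\berkP_{A^\eps}\times_{\cal{M}(A^\eps)}\berkP_{A^\eps}$. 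Finally, $h_{\res}(f)$ is swapped for $h_{\rat_d}(\langle f\rangle)$ via Proposition~\ref{GlobalResComparison1}.
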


Given any extension $L/K$ where $[L:K] = D$, we may view $f$ as a rational map over $L$ with at most $Ds$ many places of bad reduction. Then Theorem \ref{IntroUniformBaker1} implies Baker's result \eqref{eq: IntroBaker1} and gives a uniform version of it. When $K$ is a function field, we have analogous results (Theorem \ref{UniformBakerTheorem2}). Although we have an explicit dependence on the number of places of bad reduction, our constants $c_1$ and $c_2$ are ineffective as we shall see from our methods. Taking $\h_f(x) = 0$, this gives an upper bound on the number of $K$-preperiodic points that generalizes Benedetto's result \cite{Ben07} for polynomials. Note that Troncoso \cite{Tro16} had previously proved a uniform bound on $K$-preperiodic points that only depends only the degree $d$ and the number of places of bad reduction using a $S$-integrality approach. However his bound depends exponentially on $s$ whereas ours is a $O(s \log s)$ dependence which matches that of Benedetto's. 
\par 
A simple corollary of Theorem \ref{IntroUniformBaker1} is that we obtain a uniform lower bound on the canonical height of a non-preperiodic point. 

\begin{theorem} \label{IntroUniformBaker2}
Let $K$ be a number field and $f: \bb{P}_K^1 \to \bb{P}_K^1$ a rational map of degree $d \geq 2$ with $s$ places of bad reduction. Then there exists a constant $c_3 > 0$, depending only on $d$ and independent of $f$ and $K$, such that for $x \in \bb{P}^1(K)$, we have 
$$\h_f(x) = 0 \text{ or } \h_f(x) \geq \frac{1}{d^{c_3 s \log s}} h_{\rat_d}(\langle f \rangle).$$
\end{theorem}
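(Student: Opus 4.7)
The plan is to deduce Theorem \ref{IntroUniformBaker2} from Theorem \ref{IntroUniformBaker1} by a short pigeonhole argument applied to the forward orbit of $x$.

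First I would reduce to the case $\h_f(x) > 0$, which forces $x$ to be non-preperiodic for $f$, so that the iterates $x, f(x), f^2(x), \ldots$ are pairwise distinct elements of $\bb{P}^1(K)$. The functional equation for the canonical height then gives $\h_f(f^n(x)) = d^n \h_f(x)$ for every $n \geq 0$.

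Next, let $c_1, c_2$ be the constants from Theorem \ref{IntroUniformBaker1}, set $N = \lfloor c_2 s \log s \rfloor + 1$, and consider the $N$ pairwise distinct iterates $x, f(x), \ldots, f^{N-1}(x)$. Theorem \ref{IntroUniformBaker1} bounds by $c_2 s \log s < N$ the cardinality of
$$\left\{ y \in \bb{P}^1(K) \mid \h_f(y) \leq \frac{c_1}{s}\, h_{\rat_d}(\langle f \rangle) \right\},$$
so some index $n \in \{0, \ldots, N-1\}$ satisfies
$$d^n \h_f(x) = \h_f(f^n(x)) > \frac{c_1}{s}\, h_{\rat_d}(\langle f \rangle).$$
Dividing by $d^n$ and using $n \leq c_2 s \log s$ produces
$$\h_f(x) > \frac{c_1}{s\, d^{c_2 s \log s + 1}}\, h_{\rat_d}(\langle f \rangle).$$

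Finally, I would absorb the polynomial prefactor $s \cdot d / c_1$ into the exponent: since $d \geq 2$, any $c_3 > c_2$ satisfies $d^{c_3 s \log s} \geq s \cdot d^{c_2 s \log s + 1}/c_1$ for all sufficiently large $s$, and the remaining finitely many small values of $s$ can be handled by enlarging $c_3$ (interpreting $s \log s$ as $\max(s \log s, 1)$ to cover the degenerate case $s = 1$). There is no genuine obstacle: the result is a direct consequence of Theorem \ref{IntroUniformBaker1}, with the orbit's exponential height growth converting the $O(s \log s)$-type small-point count into a $d^{O(s \log s)}$-type lower bound on non-preperiodic canonical heights.
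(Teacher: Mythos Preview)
Your proposal is correct and matches the paper's own argument essentially line for line: the paper (Theorem~\ref{UniformCanonicalHeight1}) also assumes $0<\h_f(x)<d^{-c_3 s\log s}\max\{1,h_{\rat_d}(\langle f\rangle)\}$, observes that the first $\sim c_3 s\log s$ iterates of $x$ then all lie in the small-height set of Theorem~\ref{IntroUniformBaker1}, and derives a contradiction. Your version is phrased contrapositively and is slightly more careful about the constants and the edge case $s=1$, but the underlying idea---forward-orbit pigeonhole combined with $\h_f(f^n(x))=d^n\h_f(x)$---is identical.
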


This answers a weaker version of a conjecture of Silverman \cite[Conjecture 4.98]{Sil07} for rational maps with $s$ places of bad reduction and generalizes Looper's result for polynomials \cite{Loo19} to rational maps. It also implies a result of Silverman \cite{Sil81} for elliptic curves as Theorem \ref{IntroUniformBaker2} applies to Lattès maps. Compared to Looper's result, we also obtain a better dependence on $s$ although our constant $c_3$ is ineffective unlike Looper's. We note that over function fields, the question of bounding the number of preperiodic points uniformly over families, independent of the number of places of bad reduction, has been answered in some special cases, see \cite{HS88, DP20, Loo21, DF24}. 
\par 
Let us now explain the main ideas behind the proof. The general approach is the same as Baker's. For each place $v$ of $M_K$, we consider the dynamical Arakelov--Green function $g_{f,v}(x,y)$ defined as follows. We fix a lift $F: \bb{A}^2 \to \bb{A}^2$ of $f$ over $K$. Then if $\tilde{x},\tilde{y}$ denote lifts of $x,y \in \bb{P}^1(K)$ to $\bb{A}^2$, we have
$$g_{f,v}(x,y) = -\log |\tilde{x} \wedge \tilde{y}| + \Ht_F(\tilde{x}) + \Ht_F(\tilde{y}) - \frac{1}{d(d-1)} \log |\Res(F)|.$$
where $\tilde{x} \wedge \tilde{y} = x_0y_1 - x_1y_0$ and $\Res$ denotes the homogeneous resultant. Baker then shows that given $z_1,\ldots,z_N \in \bb{P}^1(K)$, we have 
\begin{equation} \label{eq: IntroBakerLower1}
\sum_{i \not = j} g_{f,v}(z_i,z_j) \geq -O_{f}(N \log N)
\end{equation} 
and it is not hard to make bounds explicit in $f$. He then looks at an archimedean place $w$ and concludes the existence of an open cover $U_1,\ldots,U_n$ and a $\delta > 0$ such that 
\begin{equation} \label{eq: IntroBakerLower2}
g_{f,w}(x,y) > \delta \text{ for } x,y \in U_i.
\end{equation}
Now given $N$ points of small height, we may choose $M = \frac{N}{n}$ of them, say $z_1,\ldots,z_M$, to live in the same open set $U_j$. Combining \eqref{eq: IntroBakerLower1} and \eqref{eq: IntroBakerLower2} and summing up over all $v \in M_K$, we obtain a contradiction if $N$ is too large as 
$$\sum_{v \in M_K} \sum_{i \not = j} N_v g_{f,v}(z_i,z_j) = 2(M-1) \sum_{i=1}^{M} \h_f(z_i).$$
To obtain Theorem \ref{IntroUniformBaker1}, we have to upgrade \eqref{eq: IntroBakerLower2} to be uniform over rational maps with at most $s$ places of bad reduction, and have the constant scale as $\h_{\rat_d}(\langle f \rangle)$ grows. The next theorem gives us this uniformity. Let
$$|\res(f)| = \sup_{\vphi \in \GL_2(K)} |\Res(\vphi \circ f \circ \vphi^{-1})|$$
denote the minimal resultant of $f$. 

\begin{theorem} \label{IntroUniformBaker3}
There exists a constant $\delta > 0$ and a positive integer $N$, depending only on $d$, such that for any degree $d$ rational map $f$ over a complete algebraically closed field $(K, | \cdot |)$, we may cover $\bb{P}^1(K)$ with $N$ open sets $U_1,\ldots,U_N$ such that 
$$g_{f}(x,y) > \delta (-\log |\res(f)|).$$ 
for any $x,y \in U_i$. If $K = (\bb{C}, | \cdot |)$ for the standard norm, we may further assume that 
$$g_f(x,y) > \delta \max\{1, -\log |\res(f)|).$$
\end{theorem}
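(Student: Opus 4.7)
The plan is to argue by contradiction using the compactness afforded by the Favre--Gong compactification of $\rat_d$ via degeneration to Berkovich dynamics. Suppose the theorem fails. Then for $\delta_n = 1/n$ and cover cardinality $N_n = n$ we obtain a sequence of degree $d$ rational maps $f_n$ over complete algebraically closed fields $(K_n, |\cdot|_n)$ such that no $n$-element open cover of $\bb{P}^1(K_n)$ achieves $g_{f_n}(x,y) > \tfrac{1}{n}(-\log|\res(f_n)|)$ on every same-set pair. After $\GL_2(K_n)$-conjugation we may assume $|\Res(F_n)| = |\res(f_n)|$, i.e.\ $F_n$ is a minimal lift.

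Extract a subsequence and split into two regimes. \textbf{(a)} If $-\log|\res(f_n)|$ stays bounded, then $\langle f_n \rangle$ ranges in a compact part of $\rat_d$; a further extraction converges to a genuine degree $d$ limit $f_\infty$ over a suitable valued limit field. Continuity of the Arakelov--Green function in $f$ over compact parts of $\rat_d$ yields $g_{f_n} \to g_{f_\infty}$ uniformly on compact subsets of $(\bb{P}^1 \times \bb{P}^1 \setminus \Diag)(K)$. Since $g_{f_\infty}(x,y) \to +\infty$ as $(x,y) \to \Diag$ and is continuous elsewhere, a standard compactness argument produces a finite cover $(U_i)_{i \leq N}$ and $\delta > 0$ with $g_{f_\infty} > \delta$ on each $U_i \times U_i$; uniform convergence then supplies the same cover with constant $\delta/2$ for $f_n$ at large $n$, contradicting the hypothesis. \textbf{(b)} If $-\log|\res(f_n)| \to \infty$, invoke Favre--Gong \cite{FG24} to extract a renormalized subsequence converging to a non-trivial Berkovich dynamical system $\fra{f}_\infty$ on a tree, together with convergence of the rescaled Arakelov--Green functions $g_{f_n}/(-\log|\res(f_n)|) \to g_{\fra{f}_\infty}$ on the Berkovich limit. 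The same covering argument performed on Berkovich space yields a finite cover with a uniform positive $\delta$, which pulls back to $\bb{P}^1(K_n)$ via the Favre--Gong correspondence and again contradicts the hypothesis.

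The archimedean improvement $g_f > \delta \max\{1, -\log|\res(f)|\}$ over $(\bb{C},|\cdot|)$ is precisely regime (a) over $\bb{C}$: continuity of $g_f$ on the bounded-resultant stratum of $\rat_d(\bb{C})$ modulo $\PGL_2(\bb{C})$, which is compact, supplies a uniform $\delta > 0$ baseline independent of $f$. The main obstacle is regime (b): both the convergence of rescaled Arakelov--Green functions to $g_{\fra{f}_\infty}$ on the tree, and the transport of a Berkovich-level open cover back to a genuine cover of $\bb{P}^1(K_n)$ while preserving the rescaled lower bound, require the full strength of the Berkovich-theoretic degeneration analysis of Favre--Gong \cite{FG24}, building on Luo \cite{Luo21, Luo22}. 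This is also where the ineffectivity in $\delta$ and $N$ enters, as the compactness arguments give no explicit constants.
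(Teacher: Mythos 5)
Your overall strategy (argue by contradiction and degenerate the offending sequence à la Favre--Gong) is the same as the paper's, but as written the argument has two genuine gaps, both at the point you yourself flag as ``the main obstacle''. First, your regime (a) relies on sequential compactness in $\rat_d$, and this is not available: the theorem quantifies over \emph{all} complete algebraically closed fields at once, so the contradiction sequence $(f_n)$ lives over varying fields $(K_n,|\cdot|_n)$ (e.g.\ $\bb{C}_{p_n}$ for varying primes), and there is no single ambient space in which to ``extract a convergent subsequence to a limit $f_\infty$ over a suitable valued limit field''. Even over one fixed non-archimedean field, $\rat_d(K)$ is not locally compact, so the bounded-resultant locus is not compact and no classical limit exists. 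This is exactly what the paper's ultrafilter/Banach-ring construction is for: one must build $A^{\eps}$ from the sequence of fields, show that $\cal{H}(\omega)$ is again complete, algebraically closed and non-archimedean (Proposition \ref{NonArchDegen1}), and show that bad reduction persists in the limit (Proposition \ref{BadReductionDegeneration1}); your ``limit field'' has no content without this. (Your archimedean-only version of regime (a), compactness of the bounded-resultant stratum of $\rat_d(\bb{C})$ modulo conjugation, is plausible, but it only covers the $\max\{1,\cdot\}$ refinement over $\bb{C}$, and you still assert without proof the joint continuity of $(f,x,y)\mapsto g_f(x,y)$ there.)

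Second, regime (b) cannot be closed by ``invoking Favre--Gong'': \cite{FG24} treats sequences of rational maps over $\bb{C}$ only, and it does not contain either the convergence of rescaled Arakelov--Green functions or the transport of a Berkovich-level cover back to covers of $\bb{P}^1(K_n)$. In the paper this transport is precisely the new technical work: one glues the fiberwise rescaled Green functions into a single function on $X^{(2)} = \berkP_{A^{\eps}} \times_{\cal{M}(A^{\eps})} \berkP_{A^{\eps}}$ and proves it is lower semicontinuous on the \emph{total} space (Proposition \ref{ArakelovGreenContinuity1}, via the criterion of Proposition \ref{ContinuityCriterion2} and the uniform telescoping bound on $\Ht_{F_\omega}$); note that only one-sided (lower semi-)continuity is proved and needed, not the uniform convergence you assert. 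Then Baker's positivity $g_{f_\omega}(\zeta,\zeta)>0$ on the limit fiber, compactness of $\pi^{-1}(\omega)$, and the fact that an open set containing a whole fiber contains the fibers $\pi^{-1}(n)$ for an $\omega$-large set of $n$ (Proposition \ref{OpenCover1}) produce the cover on $\bb{P}^1(K_n)$ with the rescaled lower bound $\delta(-\log|\res(f_n)|)$, giving the contradiction. Since you defer exactly these steps to a citation that does not contain them, the proposal as it stands does not constitute a proof; it is a correct outline of the strategy with the decisive lemmas (Propositions \ref{NonArchDegen1}, \ref{BadReductionDegeneration1}, \ref{ArakelovGreenContinuity1}, \ref{OpenCover1}) missing. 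Also note the paper does not need your two-regime split: the choice of rescaling parameters $\eps_n$ handles bounded and unbounded $-\log|\res(f_n)|$ uniformly, with the dichotomy appearing only through whether $\cal{H}(\omega)$ is archimedean or non-archimedean.
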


Using Theorem \ref{IntroUniformBaker3}, we can prove Corollary \ref{IntroUniformBaker2} with $h_{\rat_d}(\langle f \rangle)$ replaced with a minimal resultant ``height" $h_{\res}(f)$, which we define by 
$$h_{\res}(f) = \sum_{v \in M_K} N_v \log|\res(f)|_v.$$
But it turns out that $h_{\res}(f)$ is comparable with $h_{\rat_d}(\langle f \rangle)$ and thus we can conclude Theorem \ref{IntroUniformBaker1}.
\par 
To prove Theorem \ref{IntroUniformBaker3}, we apply the idea of degeneration of sequences of rational maps due to Luo \cite{Luo21, Luo22} and formalized by Favre--Gong \cite{FG24} using the theory of Berkovich spaces over Banach rings. Their method a priori applies only to sequences of rational maps over $\bb{C}$ but it is straightforward to modify for it to work over not just a fixed non-archimedean field $K$, but even over a sequence of non-archimedean fields $(K_n)$. By an argument similar to sequential compactness, Theorem \ref{IntroUniformBaker3} then reduces to verifying \eqref{eq: IntroBakerLower2} for $f$ over an arbitrary complete algebraically closed non-archimedean field $K$ where $f$ has bad reduction. This then follows from the results in Baker \cite{Bak09}.
\par 
The idea of using non-archimedean fields to understand degenerations of meromorphic families of rational maps $(f_t)_{t \in \bb{D}^{*}}$ over the puctured unit disc $\bb{D}^{*}$ goes back to Kiwi \cite{Kiw06, Kiw14}. DeMarco--Faber \cite{DF14, DF16} used such degenerations to understand the behavior of the equilibrium measure $\mu_{f_t}$ as $t \to 0$. This was also done independently by Favre \cite{Fav20}, using a hybrid space, who extended DeMarco--Faber's work to the higher dimensional setting of $\bb{P}^n$. In DeMarco--Krieger--Ye \cite{DKY20}, meromorphic degenerations were used prove a uniform bound on the number of common preperiodic points for Lattes maps. 
\par 
Later on, Lemanissier and Poineau \cite{LP24} developed the theory of Berkovich spaces over $\Spec \bb{Z}$. Poineau \cite{Poi24a, Poi24b} then builds a good notion of potential theory for the projective line over such spaces with application to dynamics. Poineau further shows that it is possible to degenerate the situation over $\bb{C}_p$, to the setting of a non-archimedean field with residue characteristic $0$, even as the prime $p$ changes which leads to a proof of Bogomolov--Fu--Tschinkel. The idea that it should be possible to consider degenerations over an arbitrary sequence of non-archimedean fields is directly inspired by Poineau's work. 
\par 
Let us explain in more detail how our degenerations work. A meromorphic family of rational maps $\{f_t\}_{t \in \bb{D}^{*}}$ over a punctured unit disc $\bb{D}^{*}$ can be viewed as a rational map $f(z)$ over the field $K = \bb{C}((t))$ of Laurent series. If we let $\bb{L}$ be the field of Puiseux series, which is the completion of the algebraic closure of $K$, then the dynamics of $f_t$ approaches the dynamics of $f(z)$ over $\bb{L}$ according to the rescaling factor $\log |t|^{-1}$. For example in Favre \cite{Fav20}, it was shown that the rescaled Lyapunov exponent $\frac{\chi_{f_t}}{\log |t|^{-1}}$ converges to the non-archimedean Lyapunov exponent $\chi_{f(z)}$. 
\par 
In general given a family of rational maps $B$, to understand the behavior of our rational maps as it approaches the boundary of $B$ we would like to construct a compactification $\ovl{B}$ where the additional points in $\ovl{B} \setminus B$ have meaningful interpretation. In fact, Faltings \cite{Fal21}, Song \cite{Son23} and Yuan \cite{Yua24} managed to use the compactification of the moduli space of curves to understand the behavior of the Arakelov--Green function of a smooth projective curve over $\bb{C}$ as the curve approaches the boundary. 
\par 
However for rational maps, there has been no satisfying compactification as of yet although there have been a number of proposals \cite{Sil98, Dem07}. For polynomials it seems that the theory of trees by DeMarco--McMullen \cite{DeM08} would give a compactification that is suitable for our purposes over $\bb{C}$ and indeed, the theory of trees has been used by Looper \cite{Loo19, Loo21, Loo21b} to obtain results for polynomials in the flavor of our main theorems. 
\par 
Luo \cite{Luo21, Luo22} gets around this problem by using ultrafilters to instead construct limits for any sequence of rational maps $(f_n)$ of fixed degree over $\bb{C}$ according to the rescaling parameter $\log |\res(f_n)|^{-1}$. If our sequence degenerates to the boundary, then these limits exists as a rational map $f_{\omega}$ over an extremely large non-archimedean field $\cal{H}(\omega)$. Although this does not give a compactification of our moduli space, it is good enough to prove properties about rational maps near the boundary if we can check that our property holds for rational maps over $\cal{H}(\omega)$ and that the property is an ``open" condition. 
\par 
Favre--Gong \cite{FG24} formalizes Luo's construction by using the Berkoivich space over a suitable Banach ring. It is then straightforward to modify their arguments to do degenerations of rational maps $(f_n)$ over an arbitrary sequence of non-archimedean fields $(K_n)$. In particular, for an artbirary sequence of primes $(p_n)$, we may take $K_n = \bb{C}_{p_n}$ which allows us to prove uniform results over all primes $p$.
\par 
We now describe the proof of Theorem \ref{IntroUniformBaker3}, at least in the archimedean case. The existence of these $N$ open sets $U_i$ is invariant under conjugation and so we may assume that $|\Res(f)| = |\res(f)|$. We now assume our Theorem is false and hence we obtain for each $n$, a rational map $f_n$ of degree $d \geq 2$ such that there do not exists an open cover of $n$ open sets $U_i$ such that 
$$g_f(x,y) \geq \frac{1}{n} \max\{1,\log|\res(f_n)|^{-1}\} \text{ for all } x,y \in U_i.$$
For simplicity, we assume that $\log|\res(f_n)| \to 0$. Then given a non-principal ultrafilter $\omega$, Favre--Gong's degeneration produces for us a rational map $f_{\omega}$ of degree $d \geq 2$ over $\cal{H}(\omega)$ with bad reduction. But Baker \cite{Bak09} proved that over an arbitrary algebraically closed non-archimedean field, there exists $N$ open sets $U_{i,\omega}$ covering $\berkP_{\cal{H}(\omega)}$, such that 
$$g_{f_{\omega}}(x,y) > \delta \text{ for all } x,y \in U_i.$$
We now use continuity arguments to spread these open sets such that for infinitely many $n$, we have $N$ open sets $U_{i,n}$ covering $\bb{P}^1(\bb{C})$ such that
$$g_{f_n}(x,y) > \delta \log|\res(f_n)|^{-1},$$
where we add in a $\log |\res(f_n)|^{-1}$ factor as that is our rescaling factor. This gives a contradiction and we are done. 

\subsection{Acknowledgements} The author would like to thank Niven Achenjang, Benjamin Church, Laura DeMarco, Charles Favre, Chen Gong, Nicole Looper, Niki Myrto Mavraki and Junyi Xie for helpful discussions regarding the paper. 

\section{Heights and Resultants}
\subsection{Heights over Global Fields}
Let $K$ be either a number field or the function field of a curve over some finite field $\bb{F}_q$. Then if $M_K$ denotes the set of places of $K$, we have the product formula 
$$\prod_{v \in M_K} |x|_v = 1$$
for all $x \in K^{\times}$. If $K$ is a number field, we can define the absolute Weil height on $\bb{P}^{1}(K)$ by 
$$h([x_0:x_1]) = \frac{1}{[K:\bb{Q}]} \sum_{v \in M_K} \log \max\{|x_0|_v, |x_1|_v\}.$$
The product formula ensures that this is independent of our choice of $[x_0:x_1]$ and the normalization ensures that the definition remains invariant when passing to finite extensions $L$. In particular we obtain a height function on $\bb{P}^1(\ovl{K})$. 
\par For function fields $K$, we do not have a universal base field like $\bb{Q}$ and instead we take $K$ to be our base field. Then for any finite extension $L/K$, we define for $x \in \bb{P}^1(L)$
$$h([x_0:x_1]) = \frac{1}{[L:K]} \sum_{v \in M_L} \log \max\{|x_0|_v, |x_1|_v\}.$$
This is then independent of the extension $L$ and so gives us a well-defined height on $\bb{P}^1(\ovl{K})$. Now let $f: \bb{P}^1 \to \bb{P}^1$ be a rational map defined over $K$ with degree $d \geq 2$. Following Call--Silverman \cite{CS93}, we can define a canonical height 
$$\h_f(x) = \lim_{n \to \infty} \frac{1}{d^n} h(f^n(x))$$
for $x \in \bb{P}^1(\ovl{K})$. The canonical height satisfies $|\h_f(x) - h(x)| \leq O(1)$ and $\h_f(f(x)) = d \h_f(x)$. If $x$ is a preperiodic point, i.e. satisfies $f^m(x) = f^n(x)$ for some $m > n$, then the canonical height $\h_f(x)$ is exactly $0$. When $K$ is a number field the converse holds due to the Northcott property but it is not necessarily true if $K$ is a function field. However if $f$ is non-isotrivial, the converse is still true by Baker \cite{Bak09}.

\subsection{Moduli Space of Rational Maps and Resultants} Now let $K$ be an arbitrary field. Given a rational map $f$ of degree $d \geq 2$ over $K$, we define a homogeneous lift $F = (P(x,y), Q(x,y))$ to be a pair of degree $d$ homogeneous polynomials in $x,y$ such that 
$$f([x_0:x_1]) = [P(x_0:x_1):Q(x_0:x_1)].$$
Working over $\bb{A}^1$, this is equivalent to saying $f(z) = \frac{F(z,1)}{G(z,1)}$. Conversely any such pair of polynomials $P(x,y),Q(x,y)$ corresponds to a rational map as long as they have no common factor. This can be ensured by requiring that the homogeneous resultant $\Res(F) = \Res(P,Q)$ is non-zero. The homogeneous resultant $\Res(F,G)$ is a homogeneous integer polynomial in the coefficients of $F$ and $G$ and this allows us to define the moduli space of degree $d$ rational maps $\Rat_d$ as $\bb{P}^{2d+1} \setminus \{\Res(F) = 0\}$, where we view the coordinates of $\bb{P}^{2d+1}$ as $[a_0: \cdots : a_d: b_0 : \cdots : b_d]$ and we let a point correspond to the polynomials
$$P(x,y) = \sum_{i=0}^{d} a_i x^i y^{d-i}, Q(x,y) = \sum_{j=0}^{d} b_j x^j y^{d-j}.$$
Observe that as $\Res(F,G)$ is defined over $\bb{Z}$, our moduli space $\Rat_d$ is defined over $\bb{Z}$ and is in fact a fine moduli space for degree $d$ rational maps \cite{Sil98}. For different lifts $F$ of $f$, the resultant of $F$ will vary but if $K$ comes with an absolute value $|\cdot |$, it is possible to define $|\Res(f)|$ by 
$$|\Res(f)| = \frac{|\Res(F)|}{\max\{|a_i|^{2d+2}, |b_j|^{2d+2}\}}$$
where $F = (P,Q)$ is a lift of $f$ and $a_i$ and $b_j$ are the coefficients of $P,Q$ respectively. Then due to the normalization, $|\Res(f)|$ is independent of the choice of the lift $F$. 
\par 
Now there is a $\SL_2$-action on $\Rat_d$ as follows. An element of $\GL_2$ gives us an automorphism of $\bb{P}^1$, which we can denote by $\vphi$. Then we let $\vphi$ act on $\Rat_d$ by setting $(\vphi,f) \mapsto \vphi \circ f \circ \vphi^{-1}$. We can then consider rational maps up to such conjugacy. Silverman \cite{Sil98} shows that there is a coarse moduli space $\rat_d$ for this functor and that $\rat_d$ is an affine scheme over $\bb{Z}$ such that over an algebraically closed field $K$, we have $\rat_d(K) = \Rat_d(K)/\SL_2(K)$. For $d = 2$, we have an explicit isomorphism $\rat_2 \simeq \bb{A}^2$ \cite{Mil93}. We will mention that the analogous spaces for higher dimensions, i.e. endomorphisms of $\bb{P}^n$, have also been studied in detail \cite{PST09, Lev11}.  
\par 
When $K$ is a complete algebraically closed field under $| \cdot |$, we can then define the minimal resultant, which we will denote by $|\res(f)|$ by setting
$$|\res(f)| = \sup_{\vphi \in \SL_2(K)} |\Res(\vphi \circ f \circ \vphi^{-1})|.$$
For non-archimedean $K$, the minimal resultant is important because it detects bad reduction. In this case, we always have $|\res(f)| \leq 1$ and $|\res(f)| < 1$ occurs exactly when no conjugate of $f$ extends to a rational map over $\Spec O_K$ where $O_K = \{x \in K \mid |x| \leq 1\}$. This is equivalent to saying that $f$ has bad reduction over $K$. Thus the minimal resultant $|\res(f)|$ plays the role of the $j$-invariant of an elliptic curve $E/K$ and so it has been studied in great detail \cite{STW14, Rum15}. 
\par 
Since $|\res(f)|$ is clearly invariant under conjugation, the function $|\res( \cdot )|$ descends to a function on $\rat_d(K)$ too.
\par 
We now want to prove that $-\log |\res(f)|$ behaves like a ``local height" for $\rat_d$. We explain what we mean as follows. Since $\rat_d$ is an affine scheme over $\Spec \bb{Z}$, we may find an embedding $\rat_d \xhookrightarrow{} \bb{A}^N$ for some $N \geq 1$. This gives us coordinates $(a_1,\ldots,a_N)$ for the points of $\rat_d(K)$ and when $K$ has an absolute value $| \cdot |$, we can define a local height $\log^+|\langle f \rangle|$ for $\langle f \rangle \in \rat_d(K)$ by setting
$$\log^+ |\langle f \rangle| = \max_{1 \leq i \leq n} \log^+|a_i|.$$
For a global field $K$ with an absolute value coming from a place $v \in M_K$, this is exactly the local height coming from using the embedding $\bb{A}^N \xhookrightarrow{} \bb{P}^N$ along with the hyperplane divisor $D = \{x_0 = 0\}$. Our aim is to prove that $-\log |\res(f)|$ is, up to some constants, at least as big as $\log^+|\langle f \rangle|$. 

\begin{proposition} \label{LocalResComparison1}
Let $(K, |\cdot |)$ be a complete valued field. Then there exists constants $A,B > 0$, depending only on $d$ and independent of $K$, such that for any $f \in \Rat_d(K)$,
$$A( -\log|\res(f)|) + B \geq \log^+|\langle f \rangle|.$$
If $| \cdot |$ is non-archimedean, then we may take $B = 0$. 
\end{proposition}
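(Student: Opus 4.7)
My plan is to exploit the fact that $\rat_d$ embeds in $\bb{A}^N$ via $\SL_2$-invariant rational functions on $\Rat_d$, and then to bound each such function in terms of the minimal resultant by choosing a judicious $\SL_2$-conjugate of $f$.

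First I will use that $\rat_d$ is affine of finite type over $\bb{Z}$: each coordinate $a_i$ of the chosen embedding $\rat_d \hookrightarrow \bb{A}^N$ pulls back to an $\SL_2$-invariant regular function on $\Rat_d = \{\Res \neq 0\} \subset \bb{P}^{2d+1}_\bb{Z}$, and any such function can be written as $a_i = P_i(F)/\Res(F)^{k_i}$ with $P_i \in \bb{Z}[\alpha_0,\ldots,\alpha_d,\beta_0,\ldots,\beta_d]$ homogeneous of degree $2dk_i$ in the coefficients of a lift $F = (P,Q)$, so that the ratio descends projectively. Put $k = \max_i k_i$ and let $C_0 \geq 1$ denote the maximum of the $\ell^1$-norms of the coefficient vectors of the $P_i$; both depend only on $d$ and on the embedding.

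Next, for $f \in \Rat_d(K)$ I will fix an arbitrary lift $F$ and set $M = \max_{i,j}\{|\alpha_i|,|\beta_j|\}$. The normalization built into the definition of $|\Res(f)|$ makes the ratio $|a_i(\langle f \rangle)| = |P_i(F)|/|\Res(F)|^{k_i}$ independent of the lift, and since $P_i$ is homogeneous of degree $2dk_i$ with integer coefficients, a monomial bound gives $|P_i(F)| \leq C_0 M^{2dk_i}$ in the archimedean case, while the ultrametric inequality together with $|c| \leq 1$ for $c \in \bb{Z}$ upgrades this to $|P_i(F)| \leq M^{2dk_i}$ in the non-archimedean case. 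The $M$-factors cancel against the corresponding powers in $|\Res(F)|^{k_i}$ and I obtain
$$
|a_i(\langle f \rangle)| \;\leq\; \frac{C_0}{|\Res(f)|^{k_i}},
$$
with $C_0 = 1$ in the non-archimedean setting.

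Finally I will upgrade $|\Res(f)|$ to $|\res(f)|$ using $\SL_2$-invariance. Both sides above are unchanged upon replacing $f$ by a conjugate $\vphi f \vphi^{-1}$, and by the definition of $|\res(f)|$ as a supremum, I may choose $\vphi \in \SL_2(K)$ with $|\Res(\vphi f \vphi^{-1})| \geq \tfrac{1}{2}|\res(f)|$; in the non-archimedean case the supremum is attained (for instance by work of Rumely \cite{Rum15}), so one may take equality and $C_0 = 1$. Substituting, taking logarithms and maximizing over $i$ yields $\log^+|\langle f \rangle| \leq k(-\log|\res(f)|) + B_1$ for a $d$-dependent constant $B_1$. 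In the non-archimedean case $B_1 = 0$ and $|\res(f)| \leq 1$ makes $-\log|\res(f)| \geq 0$, so $B = 0$ suffices. In the archimedean case the only residual concern is that $-\log|\res(f)|$ could be negative when $|\res(f)|$ is large, but the same monomial bound applied to $\Res$ itself gives $|\res(f)| \leq R_d$ for a $d$-dependent constant $R_d$, hence $-\log|\res(f)| \geq -\log R_d$, which can be absorbed into $B$. The only input beyond routine estimation is the attainment (or near-attainment) of the supremum defining $|\res(f)|$; this is where I expect the main subtlety to lie in a careful write-up, while the remaining steps are formal.
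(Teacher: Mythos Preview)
Your proof is correct and follows essentially the same strategy as the paper's: both recognize that the coordinates of $\langle f \rangle$ pull back to regular functions on $\Rat_d$ over $\bb{Z}$, bound them via the triangle inequality in terms of $|\Res(f)|^{-1}$, and then pass to the minimal resultant by conjugation-invariance of the left side. The paper phrases this through the affine charts $V_i \simeq \bb{A}^{2d+2}/(y\Res(F_i)-1)$, while you use the equivalent global description of $\Gamma(\Rat_d,O)$ as ratios $P/\Res^k$ with $P \in \bb{Z}[\alpha,\beta]_{2dk}$; these are two ways of writing the same algebra.

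One simplification you should make: your discussion of attainment or near-attainment of the supremum defining $|\res(f)|$ is unnecessary. You have already shown
\[
\log^+|\langle f \rangle| \;\le\; k\bigl(-\log|\Res(\vphi f \vphi^{-1})|\bigr) + B_1
\]
for \emph{every} $\vphi \in \SL_2(K)$, with the left side independent of $\vphi$. Taking the infimum of the right side over $\vphi$ gives $k(-\log|\res(f)|)+B_1$ directly, without any factor of $2$ and without invoking Rumely. This is exactly the paper's closing sentence, and it also removes your ``residual concern'' in the archimedean case: with the $\log 2$ gone, $B_1 = \log C_0 \ge 0$ already suffices.
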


\begin{proof}
We first give affine charts for $\Rat_d$ over $\Spec \bb{Z}$. Let $x_0,\ldots,x_{2d+1}$ be the coordinates for $\bb{P}^{2d+1}$ and let $U_i = \bb{P}^n \setminus V(x_i)$, which is isomorphic to $\bb{A}^{2d+1}$. If $f$ is represented by $[x_0: \cdots : x_{2d+1}]$ with $x_i \not = 0$, we let $F_i$ be the lift of $f$ after setting $x_i = 1$. Then $\Rat_d \cap U_i$ is given by $\bb{A}^{2d+1} \setminus V(\Res(F_i))$ which is isomorphic to $V_i = \bb{A}^{2d+2}/(y \Res(F_i) - 1)$ by the map $(x_1,\ldots,x_{2d+1}) \mapsto (x_1,\ldots,x_{2d+1}, \Res(F_i)^{-1})$.
\par 
We now have a morphism of affine schemes $\pi: \Rat_d \to \rat_d$ over $\bb{Z}$. Given any point $f \in \Rat_d(K)$, we let $i$ be an index such that $|x_i|$ is maximal and write $f$ in the affine chart $V_i$. Then in this chart as $\pi: V_i \to \rat_d$ is a morphism over $\bb{Z}$, viewing $\rat_d$ as a closed subscheme of $\bb{A}^N$, our morphism is of the form
$$\left(\frac{x_0}{x_i}, \cdots, \frac{x_{2d+1}}{x_i}, \Res(F_i)^{-1} \right) \mapsto (P_1(\ovl{x}),\ldots,P_N(\ovl{x})) $$
where each $P_i$ is a polynomial with $\bb{Z}$-coefficients with $2d+2$ variables and is being evaluated on 
$$\ovl{x} = \left( \frac{x_0}{x_i},\ldots,\frac{x_{2d+1}}{x_i}, \Res(F_i)^{-1} \right).$$ 
Now by the choice of our chart, we have $|\frac{x_j}{x_i}| = 1$ and furthermore, we have $\Res(F_i) = \Res(f)$ as the maximum valuation of the homogeneous coordinates is exactly $1$. It then follows immediately from the triangle inequality that 
$$\log^+ \left|P_j \left(\frac{x_0}{x_1},\ldots,\frac{x_{2d+1}}{x_i},\Res(F_i)^{-1} \right) \right| \leq A(-\log \Res(f)) + B$$
for some constants $A,B > 0$ that depend only on $P_j$ and not on $K$. Furthermore if $| \cdot |$, the strong triangle inequality implies that we may take $B = 0$ as desired. Finally as this holds for any conjugate of $f$, we may replace $|\Res(f)|$ with $|\res(f)|$ as desired. 
\end{proof}

For a global field $K$, we may define a minimal resultant ``height" on $\Rat_d(\ovl{K})$ by the formula 
$$h_{\res}(f) = \frac{1}{[L:K]}\sum_{v \in M_L} N_v \log^+|\res(f)|_v$$
and this descends to a function on $\rat_d(\ovl{K})$. We can then prove that this minimal resultant height is comparable with our moduli height on $\rat_d$. 

\begin{proposition} \label{GlobalResComparison1}
There exists constants $A,B > 0$ such that for any global field $K$,
$$A h_{\res}(f) + B \geq h(\langle f \rangle)$$
where we can take $B = 0$ if $K$ is a function field. If $K$ is a number field, then there exists constants $C,D > 0$, independent of $K$, such that 
$$C h(\langle f \rangle) + D \geq h_{\res}(f).$$
\end{proposition}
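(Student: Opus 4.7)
The plan is a place-by-place argument, summing Proposition~\ref{LocalResComparison1} for one direction and a reverse local bound for the other. Fix $f \in \Rat_d(\ovl K)$ and a finite extension $L/K$ containing the coordinates of a chosen lift of $f$.

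For the first inequality, Proposition~\ref{LocalResComparison1} gives at each $v \in M_L$
$$A\bigl(-\log|\res(f)|_v\bigr) + B_v \;\ge\; \log^+|\langle f\rangle|_v,$$
with $A, B$ depending only on $d$, where $B_v = B$ at archimedean $v$ and $B_v = 0$ otherwise. Multiplying by $N_v/[L:K]$, summing over $v \in M_L$, and unfolding the definitions produces $A\,h_{\res}(f) + B' \ge h(\langle f\rangle)$. For a function field there are no archimedean places and $B' = 0$; for a number field the weighted sum of $B_v$ over archimedean $v$ is, under the standard absolute-height normalization, a constant independent of $L$ and $K$.

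For the second inequality (number fields) the strategy is symmetric once one has a reverse local bound
$$-\log|\res(f)|_v \;\le\; C_1 \log^+|\langle f\rangle|_v + D_v,$$
uniform in $v$, with $D_v$ supported on the finitely many archimedean places and bounded there. Summing with weights $N_v/[L:K]$ gives $C\,h(\langle f\rangle) + D \ge h_{\res}(f)$. To prove the reverse local bound at a non-archimedean $v$, I would use the affine charts of $\Rat_d$ from the proof of Proposition~\ref{LocalResComparison1}: after conjugating by an appropriate $\vphi \in \SL_2(K_v)$, one can select a representative $F$ of $f$ whose $\Rat_d$-coordinates are bounded polynomially in the $\rat_d$-coordinates of $\langle f\rangle$. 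The estimate $|\res(f)|_v \ge |\Res(F)|_v/\max\{|a_i|_v,|b_j|_v\}^{2d+2}$, combined with the fact that $\Res$ is a polynomial over $\bb{Z}$ in the $\Rat_d$-coordinates, then yields the linear bound.

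The main obstacle is constructing such a representative with constants uniform in $v$ and in residue characteristic---equivalently, providing an effective local section of the GIT quotient $\Rat_d \to \rat_d$ defined over $\bb{Z}$---together with a lower bound on $|\Res(F)|_v$ in terms of the $\rat_d$-coordinates that does not degenerate as the residue field varies.
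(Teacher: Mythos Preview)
Your treatment of the first inequality is correct and is exactly what the paper does: sum Proposition~\ref{LocalResComparison1} over all places, noting that the additive constant vanishes at non-archimedean places so that $B=0$ in the function-field case.

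For the second inequality your strategy diverges from the paper's, and the obstacle you flag is real rather than merely technical. A place-by-place reverse bound of the shape $-\log|\res(f)|_v \le C_1 \log^+|\langle f\rangle|_v$ at non-archimedean $v$ would in particular force potential good reduction whenever $\langle f\rangle$ has $v$-integral coordinates in the affine model of $\rat_d$. But $\rat_d$ is only a coarse moduli space over $\Spec\bb{Z}$, and an $O_v$-point of $\rat_d$ need not lift to an $O_v$-point of $\Rat_d$; equivalently, there is no integral section of the GIT quotient $\Rat_d\to\rat_d$, so the ``effective local section'' you ask for does not exist. Thus the local reverse comparison fails, and no amount of care with residue characteristics repairs it.

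The paper avoids this by working globally: it invokes Silverman's height comparison for GIT quotients \cite[Lemma~6.32]{Sil12}, which gives constants $C,D$ with $\min_{g\in\pi^{-1}(\langle f\rangle)} h(g)\le C\,h(\langle f\rangle)+D$, where $h$ on $\Rat_d$ is taken to be the height attached to the divisor $\{\Res=0\}$ on $\bb{P}^{2d+1}$. Since $h_{\res}(f)\le \min_{g} h(g)$ by definition of the minimal resultant, the inequality follows. The point is that the good conjugate is chosen once, globally, not place by place.
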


\begin{proof}
The first inequality follows from summing up Proposition \ref{LocalResComparison1} for each place $v$. When $K$ is a function field, as each place $v \in M_K$ is non-archimedean, we may take $B = 0$. For the second inequality, we use a result of Silverman \cite[Lemma 6.32]{Sil12} that tells us that for any ample height $h$ on $\Rat_d$, we have constants $C,D > 0$ such that
$$\min_{g \in \pi^{-1}(\langle f \rangle)} h(g) \leq C h(\langle f \rangle) + D$$
where $\pi: \Rat_d(\ovl{\bb{Q}}) \to \rat_d(\ovl{\bb{Q}})$ is the projection map over $\ovl{\bb{Q}}$. We may choose our height $h$ to be coming from the divisor $\Res(F) = 0$ on $\bb{P}^{2d+1}$, that is to say
$$h(f) = \sum_{v \in M_K} N_v \log |\Res(f)|^{-1}_v.$$
By definition of the minimal resultant, we clearly have 
$$\min_{g \in \pi^{-1}(\langle f \rangle)} h(g) \geq h_{\res}(f)$$
and so
$$C h(\langle f \rangle) + D \geq h_{\res}(f)$$
as desired. 
\end{proof}

\section{Arakelov--Green Functions}
\subsection{Berkovich Projective Line} We first start by recalling the definition of the Berkovich projective line \cite{BR10, Ben19} over a valued field $(k, | \cdot |)$. Given a ring $A$, we say that $| \cdot |$ is a seminorm on $A$ if it is a function $| \cdot |: A \to \bb{R}_{\geq 0}$ such that 
\begin{enumerate}
\item $|0| = 0 , |1| = 1$;
\item $|fg| \leq |f| \cdot |g|;$
\item $|f+g| \leq |f| + |g|.$
\end{enumerate}

If $0$ is the unique element satisfying $|x| = 0$, we say that $| \cdot |$ is a norm. If $|fg| = |f| \cdot |g|$, we say that it is a multiplicative (semi)norm.

\begin{definition} \label{Chap2Defn1}
Let $(k, | \cdot |)$ be an algebraically closed, complete field equipped with a norm. The Berkovich affine line $\berkA_k$ as a set consists of all multiplicative seminorms on $k[T]$ that extends the $| \cdot |$ on $k$.     
\end{definition}

Given an element $x \in \berkA_k$, we obtain a function on $\berkA_k$ by $f \mapsto x(f)$. We will write $x(f)$ as $|f(x)|$ which will be a more suggestive notation. We will put the weakest topology on $\berkA$ such that for any $f \in k[T]$, the map $f: \berkA \to \bb{R}_{\geq 0}$ given by $x \mapsto |f(x)|$ is continuous. One heuristic to think about this topology is that the continuous functions on $\berkA$ are those that can be obtained as a limit of algebraic functions (elements of $k[T])$. 
\par 
Berkovich in \cite{Ber90} classified the points of $\berkA_k$. First given any point $x \in \bb{A}^1(k) \simeq k$, we have a seminorm given by evaluating our function $f$ on $x$ and then taking its norm, i.e. $|f(x)|$. These are called the Type I points (also known as the classical points) and are dense in $\berkA_k$. In fact when $| \cdot |$ is an archimedean norm, there are no other points in $\berkA_k$ as a consequence of the Gelfand--Mazur theorem. 
\par 
We shall now assume henceforth that $| \cdot |$ is non-archimedean. Let $\zeta = D(a,r)$ be the disc of radius $r$ and centered at $a$. We can then define a seminorm on $k[T]$, which is a norm if $r > 0$, by
$$|f|_{\zeta} = \sup_{x \in \zeta} |f(x)|.$$
When $r = 0$, this is simply evaluation at the point $a$. It is easy to verify the triangle inequality but the fact that this norm is multiplicative only holds for non-archimedean fields and follows from Gauss' lemma. Let $|k^{\times}|$ denote the image of $k^{\times}$ under $| \cdot |: k \to \bb{R}_{\geq 0}$. If $r \in |k^{\times}|$, we say that $| \cdot |_{\zeta}$ is a type II point and otherwise, we say it is a type III point. 
\par 
There is a final type of point, known as type IV points, that occurs when $(k , | \cdot |)$ is not sphereically complete. Let $\zeta_1 \supset \zeta_2 \supset \cdots$ be a sequence of closed discs such that their radii $r_n$ does not go to $0$, but $\cap_{n=1}^{\infty} \zeta_n = \emptyset$. We can then define a norm
$$|f |_{\{\zeta_n\}} = \inf_{n \geq 1} |f|_{\zeta_n}.$$
Since the intersection of these discs is empty, our norm does not correspond to any of the previous types of points. It is then a theorem of Berkovich \cite{Ber90} that all points in $\berkA$ correspond to one of these four types. There is a distinguished point $\zeta_{\Gauss}$ that corresponds to the unit disc $D(0,1)$.
\par 
We can now construct the Berkovich projective line $\berkP_k$ by taking the union of $\berkA_k$ with a single point $\{\infty\}$. The space $\berkP_k$ turns out to be a compact, Haursdorff and path-connected. This gives us a suitable space to construct a non-archimedean analogue of potential theory, which has been thoroughly explored by Baker--Rumely \cite{BR10}. As this aspect of $\berkP_k$ will not be needed for us, we will refrain from mentioning more. 

\subsection{Arakelov--Green Functions} Let $(k, | \cdot|)$ be a complete algebraically closed field. Motivated by constructions of Arakelov over $\bb{C}$, for any rational map $f: \bb{P}^1_k \to \bb{P}^1_k$ of degree $d \geq 2$, Baker--Rumely \cite{BR06} associates to it an Arakelov--Green function $g_f(x,y): \berkP_k \times \berkP_k \to \bb{R} \cup \{\infty\}$ that is lower semi-contiuous. We first explain their construction. 
\par 
First, we pick a homogeneous lift $F$ of $f$ so that $F: \bb{A}^2_k \to \bb{A}^2_k$ is an endomorphism of $\bb{A}^2$. We can then define a homogeneous local height function given by
$$\Ht_F((x,y)) = \lim_{n \to \infty} \frac{1}{d^n} \frac{||F^n((x,y))||}{d^n}$$
where $||(x,y)|| = \max\{|x|,|y|\}$. This function is homogeneous, i.e. 
$$\Ht_F((\lambda x , \lambda y)) = \Ht_F((x,y)) + \log |\lambda|$$
and satisfies
$$\Ht_F(F(x,y)) = d \Ht_F((x,y)).$$
Given $x = (x_0,x_1), y = (y_0,y_1) \in \bb{A}^2_k$, we define $|x \wedge y|$ to be $|x_0 y_1 - x_1y_0|$. We now define the Arakelov--Green function as
$$g_f(x,y) = -\log |\tilde{x} \wedge \tilde{y}| + \Ht_F(\tilde{x}) + \Ht_F(\tilde{y}) - \frac{1}{d(d-1)} \log |\Res(F)|$$
where $\tilde{x}, \tilde{y}$ are lifts of $x,y$ to $\bb{A}^2$. Our definition is independent of the choices of lifts of $f,x$ and $y$. A priori it is only defined on $\bb{P}^1(k) \times \bb{P}^1(k)$ but one can show that it extends to an upper semi-continuous function on $\berkP_k \times \berkP_k$. 
\par 
We require a few properties of $g_f(x,y)$ established by Baker and Rumely. The first is that if $K$ is a global field, then for each $v \in M_K$ we obtain an Arakelov-Green function $g_{f,v}(x,y)$. Given $z_1,\ldots,z_n \in \bb{P}^1(K)$, we have the formula 
\begin{equation} \label{eq: GlobalArakelovSum1}
\sum_{v \in M_K} \sum_{i\not = j} N_v g_{f,v}(z_i,z_j) = (n-1) \sum_{i=1}^{n} \h_f(z_i).
\end{equation}

The second property is that when either $| \cdot |$ is archimedean or $| \cdot |$ is non-archimedean and $f$ has bad reduction, we have 
$$g_f(x,x) > 0 \text{ for all } x \in \berkP_k.$$
This was proven by Baker \cite{Bak09} where this is trivial to verify for type I points as $-\log |x-y|$ blows up as $y \to x$, but is not clear for the non-classical points. If $f$ has potential good reduction, we have $g_f(x,y) \geq 0$ for all $x,y \in \berkP_k$.
\par 
The third property is that $g_{\vphi \circ f \circ \vphi^{-1}}$ and $g_f$ are related by the transformation 
\begin{equation} \label{eqref: ArakelovGreenMobius1}
g_{\vphi \circ f \circ \vphi^{-1}}(\vphi(x),\vphi(y)) = g_f(x,y).
\end{equation}
In particular many properties that we are interested in for $g_f(x,y)$ are invariant under conjugation and we may freely replace $f$ with its conjugate. This will be used later in the section. 
\par 
Finally we need a lower bound that was established by Baker in \cite{Bak06}. For a complete algebraically closed field $(k, | \cdot |)$, given $z_1,\ldots,z_N \in \bb{P}^1(k)$, we have the inequality 
$$\sum_{i \not = j} g_f(z_i,z_j) \geq - O_f(N \log N).$$
It will be important for us to figure out the exact dependence on $f$ and we will do so now. After fixing a lift $F$ of $f$, we let
$$K_F = \{z \in \bb{A}^2_k \mid ||F^n(z)|| \not \to \infty\}$$
and we let $R(F)$ be the smallest positive real number such that $K_F \subseteq \{ ||z|| \leq R(F)\}$. For $F = (P(x,y),Q(x,y))$, we set $|F| = \max\{|a_i|, |b_j|\}$ where $a_i,b_j$ are the coefficients of $P$ and $Q$ respectively. 

\begin{proposition} \label{BoundRadius1}
Let $F$ be a lift such that $|F| = 1$. Then if $k$ is non-archimedean, then we may take $R(F) = |\Res(F)|^{-1/(d-1)}$. Otherwise, there exists a constant $C$ depending only on $d > 0$ such that we may take $R(F) = C|\Res(F)|^{-1/(d-1)}$. If $k$ is non-archimedean, we may take $C = 1$. 
\end{proposition}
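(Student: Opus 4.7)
The plan is to deduce a one-step inequality of the form $\|F(z)\| \geq C_d |\Res(F)| \|z\|^d$ and then iterate it. The key input is the classical Nullstellensatz-style identity for the homogeneous resultant: writing $F = (P,Q)$, there exist bihomogeneous polynomials $A_1, B_1, A_2, B_2 \in k[x,y]$, each of degree $d-1$, whose coefficients are universal integer-coefficient polynomial expressions in the coefficients of $P$ and $Q$, satisfying
$$A_1 P + B_1 Q = \Res(F)\, x^{2d-1}, \qquad A_2 P + B_2 Q = \Res(F)\, y^{2d-1}.$$
These identities encode that $\Res(F) \neq 0$ forces $(P,Q)$ to have no common zero on $\bb{P}^1$.

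Evaluating at $z = (x,y)$ and applying the triangle inequality yields
$$|\Res(F)| \max(|x|,|y|)^{2d-1} \leq C_d' \max(|x|,|y|)^{d-1}\, \|F(z)\|,$$
where $C_d'$ is a bound on the size of the coefficients of $A_i$ and $B_i$ viewed as polynomials in $x,y$. Since those coefficients are universal integer polynomials in the coefficients of $F$ and $|F|=1$, the archimedean triangle inequality gives a constant $C_d'$ depending only on $d$. In the non-archimedean case, the ultrametric inequality together with $|n| \leq 1$ for $n \in \bb{Z}$ lets us take $C_d' = 1$. Dividing by $\max(|x|,|y|)^{d-1}$ gives the desired one-step bound $\|F(z)\| \geq C_d |\Res(F)|\, \|z\|^d$ with $C_d = 1/C_d'$, and $C_d = 1$ in the non-archimedean setting.

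To conclude I would iterate. Writing $u_n = \|F^n(z)\|$ and $v_n = (C_d |\Res(F)|)^{1/(d-1)} u_n$, the one-step inequality becomes $v_n \geq v_{n-1}^d$. Hence $v_n \to \infty$ as soon as $v_0 > 1$, which is equivalent to $\|z\| > (C_d |\Res(F)|)^{-1/(d-1)}$. This shows $K_F \subset \{\|z\| \leq (C_d |\Res(F)|)^{-1/(d-1)}\}$, giving the claimed $R(F)$ with the constant $C = C_d^{-1/(d-1)}$ in the archimedean case and $C = 1$ non-archimedeanly.

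The only delicate point is the dichotomy between the archimedean and non-archimedean cases in extracting $C_d$. The critical observation is that since the coefficients of $A_i, B_i$ are universal integer polynomials in the coefficients of $F$, the normalization $|F|=1$ propagates through them with no multiplicative loss under the ultrametric inequality, whereas in the archimedean case one picks up a combinatorial factor bounded by the number of monomials, which depends only on $d$. Everything else is a short calculation.
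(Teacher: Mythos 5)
Your proof is correct and follows essentially the same route as the paper: the Bezout-type resultant identities $A_1P+B_1Q=\Res(F)x^{2d-1}$, $A_2P+B_2Q=\Res(F)y^{2d-1}$, the triangle inequality together with the normalization $|F|=1$ to get $\|F(z)\|\geq C_d|\Res(F)|\,\|z\|^d$, and then iteration to show escape outside the ball of radius $(C_d|\Res(F)|)^{-1/(d-1)}$. The only cosmetic difference is that you derive the non-archimedean case (with constant $1$) directly from the ultrametric inequality, whereas the paper cites Baker's Remark 3.3 for that case and runs the identity argument only archimedeanly.
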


\begin{proof}
We have to give an explicit constant $A > 0$, depending on $F$, such that 
$$A(||z||)^d \leq ||F(z)||$$
for all $z \in K^2$. Then we may take $R(F) = \max\{1,A^{-1/(d-1)}\}$ as if $||z|| \geq (1+\delta)\max\{1,A^{-1/(d-1)}\}$, then we have 
$$||F(z)|| \geq (1+\delta)^d \max\{1,A^{-1/(d-1)}\}$$
and by induction, we must have $||F^n(z)|| \to \infty$. Now, \cite[Remark 3.3]{Bak09} implies that for non-archimedean $k$, we may take $A = |\Res(F)|$. We shall now handle the archimedean case. By homogenity, we may assume that $||z|| = 1$ and it suffices to find $A$ such that $||F(z)|| \geq A$. Then if $||z|| > A^{-1/(d-1)}$, it follows easily that $||F^n(z)|| \to \infty$. We find polynomials $g_1,g_2,h_1,h_2$ and write 
$$g_1(x,y) F_1(x,y) + g_2(x,y) F_2(x,y) = \Res(F) x^{2d-1}$$
$$h_1(x,y) F_1(x,y) + h_2(x,y) F_2(x,y) = \Res(F) y^{2d-1}.$$
The coefficients of $g_i,h_j$ are polynomials with integer coefficients in the coefficients of $F_1,F_2$. In particular there exists some constant $A'$ such that $||g_i||,||h_j|| \leq A'$ as $||F|| \leq 1$. Using the triangle equality again and the fact that $||z|| = ||(x,y)|| = 1$, we obtain 
$$||F(x,y)|| \geq \frac{\Res(F)}{2A'} ||(x^{2d-1}, y^{2d-1})|| \geq \frac{\Res(F)}{2A'}$$
and so we may take $A = \frac{\Res(F)}{2A'}$ as desired. 
\end{proof}

We now obtain the following lower bound.

\begin{proposition} \label{ArakelovSumLowerBound1}
Let $z_1,\ldots,z_N \in \bb{P}^1(k)$ and assume that $N = d^k$. We have the lower bound 
$$\sum_{i \not = j} g_f(z_i,z_j) \geq -(C_k + \log |\res(f)|^{-1}) O_d (N \log N)$$
where $C_k = 1$ if $(k, | \cdot |)$ is archimedean and $0$ otherwise.
\end{proposition}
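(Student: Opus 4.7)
My plan is to proceed in three main steps. First, by the conjugation invariance \eqref{eqref: ArakelovGreenMobius1} of the Arakelov--Green function, I may replace $(f, z_i)$ by a $\GL_2(k)$-conjugate pair without altering either side; choosing this conjugate appropriately, I arrange that our homogeneous lift $F$ satisfies $|\Res(F)| = |\res(f)|$, and after a further rescaling $F \to \lambda F$ (which also preserves $g_f$), I normalize $|F| = 1$, placing us in the setting of Proposition~\ref{BoundRadius1}. Next, I choose lifts $\tilde z_i \in \bb{A}^2(k)$ of each $z_i$ with $\Ht_F(\tilde z_i) = 0$, which is possible by the homogeneity $\Ht_F(\lambda \tilde z) = \Ht_F(\tilde z) + \log|\lambda|$. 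The definition of $g_f$ then unfolds as
\begin{equation*}
\sum_{i\neq j} g_f(z_i, z_j) \;=\; -\sum_{i\neq j} \log|\tilde z_i \wedge \tilde z_j| \;+\; \frac{N(N-1)}{d(d-1)}\log|\res(f)|^{-1},
\end{equation*}
with the second term non-negative, so it suffices to produce a matching lower bound for the wedge sum.

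Second, I record the easy pointwise estimates. With the normalization $\Ht_F(\tilde z_i) = 0$, Proposition~\ref{BoundRadius1} gives $\|\tilde z_i\| \leq R(F) \leq O_d(1)\cdot |\res(f)|^{-1/(d-1)}$, where the archimedean constant is absorbed into $O_d$. The crude bound $|\tilde z_i \wedge \tilde z_j| \leq C\|\tilde z_i\|\|\tilde z_j\|$ then yields $-\log|\tilde z_i\wedge \tilde z_j| \geq -O_d(C_k+\log|\res(f)|^{-1})$, summing to $-\sum_{i\neq j}\log|\tilde z_i\wedge\tilde z_j| \geq -O_d(N^2(C_k + \log|\res(f)|^{-1}))$. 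This is weaker than what we want by a factor $N/\log N$.

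The main obstacle is to upgrade this to the sharp $N\log N$ bound, a Fekete--Szeg\H{o}-type statement. Here the assumption $N = d^k$ enters crucially, suggesting a telescoping through the iterates of $F$. The key identity is the factorization $F(\tilde x) \wedge F(\tilde y) = (\tilde x\wedge \tilde y)\cdot T_F(\tilde x, \tilde y)$, where $T_F$ is a bihomogeneous form of bidegree $(d-1, d-1)$ obtained from the diagonal vanishing of the left-hand side. Iterating $k$ times yields
\begin{equation*}
\sum_{i\neq j}\log|\tilde z_i\wedge \tilde z_j| \;=\; \sum_{i\neq j}\log|F^k(\tilde z_i) \wedge F^k(\tilde z_j)| \;-\; \sum_{l=0}^{k-1}\sum_{i\neq j}\log|T_F(F^l(\tilde z_i), F^l(\tilde z_j))|.
\end{equation*}
The leading term is a discriminant-style expression associated to a polynomial of degree $N$ whose coefficient norms are controlled by $\|F^k\|$, so its logarithm is bounded in terms of $\log|\Res(F^k)|$, which scales like $d^k\log|\res(f)|$. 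Each of the $k=\log_d N$ correction layers contributes at most $O_d(d^k(C_k + \log|\res(f)|^{-1}))$ after summing over pairs, using $\|F^l(\tilde z_i)\|\leq R(F)$ and an upper bound on the coefficient norm of $T_F$. Accumulating the $k$ layers produces the desired $O_d((C_k + \log|\res(f)|^{-1})\cdot N\log N)$ bound, and combining with the first two steps yields the proposition. The principal remaining work is to carry out this telescoping rigorously, with explicit constants uniform over both the archimedean and non-archimedean cases.
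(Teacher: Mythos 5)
Your reductions in the first step (conjugating so that the chosen lift nearly realizes the minimal resultant, normalizing $|F|=1$, choosing lifts with $\Ht_F(\tilde z_i)=0$, and unfolding the definition of $g_f$) do mirror the paper's setup, up to the minor point that the supremum defining $|\res(f)|$ need not be attained, so one should only ask for, say, $2|\Res(f)|\geq |\res(f)|$ as the paper does. But the heart of the proposition is precisely the $N\log N$ energy estimate, and here the paper does not reprove anything: it quotes Baker's explicit bound \cite[Corollary 2.3]{Bak09}, which already has the shape $\sum_{i\neq j}g_f(z_i,z_j)\geq -\bigl(\text{const.}(F)\bigr)\,O_d(N\log N)$ with the constant controlled by $\log|\Res(F)|^{-1}$ and a radius $r(F)$ containing $K_F$, and then feeds in Proposition~\ref{BoundRadius1} and the conjugation step. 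Your proposal instead tries to reprove that core estimate by telescoping through the factorization $F(\tilde x)\wedge F(\tilde y)=(\tilde x\wedge\tilde y)\,T_F(\tilde x,\tilde y)$, and this is where the argument breaks down rather than being a routine matter of "carrying out the telescoping rigorously."

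Concretely: since you need an \emph{upper} bound on $\sum_{i\neq j}\log|\tilde z_i\wedge\tilde z_j|$, your identity forces you to bound the subtracted layers $\sum_{i\neq j}\log|T_F(F^l\tilde z_i,F^l\tilde z_j)|$ from \emph{below}, whereas you invoke an upper bound on $|T_F|$ via its coefficient norms — the wrong direction. No pointwise lower bound can exist, because $T_F(x,y)=0$ whenever $x\neq y$ lie in the same fiber of $f$, and nothing excludes such pairs among $z_1,\ldots,z_N$; in that case the two right-hand terms are $-\infty$ and the split identity cannot be estimated termwise at all. The leading term fares no better: $\sum_{i\neq j}\log|F^k(\tilde z_i)\wedge F^k(\tilde z_j)|$ is just the same kind of pairwise energy for $N$ points of norm $\leq R(F)$ (it is not a discriminant of a single degree-$N$ polynomial, and there is no inequality bounding it by $\log|\Res(F^k)|$ as you assert), so the only available generic bound is again Hadamard/trivial, of size $N\log N+N^2\log^+R(F)$ — i.e., the telescoping returns you to the original problem with no gain, and your claimed per-layer contribution $O_d(d^k(C_k+\log|\res(f)|^{-1}))$ does not follow from summing a per-pair bound over the $\sim N^2$ pairs. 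In short, the $N\log N$ statement (where the assumption $N=d^k$ enters in Baker's argument) is a genuine theorem of \cite{Bak09} that your sketch neither reproves nor cites; as written the proposal proves only the $N^2$ bound you already concede is insufficient. A correct self-contained proof would need Baker's actual mechanism (a Vandermonde/Hadamard-type argument adapted to the homogeneous dynamics), or simply the citation the paper uses.
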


\begin{proof}
By \cite[Corollary 2.3]{Bak09}, for any lift $F$ of $f$ we have 
$$\sum_{i \not = j} g_f(z_i,z_j) \geq - \eps_k N \log N - (2 \log |\Res(F)| + r(F)) O_d(N \log N)$$
where $r(F)$ is any positive real number such that $K_F \subseteq \{ ||z|| \leq r(F)\}$. We first replace $f$ with a conjugate so that $2 |\Res(f)| \geq |\res(f)|$, which we may do so by \eqref{eqref: ArakelovGreenMobius1}. Then picking a lift $F$ such that $|F| = 1$, we have $|\Res(F)| = |\Res(f)|$ and by Proposition \ref{BoundRadius1}, we may take $R(F) = C'_k |\Res(f)|^{-1/(d-1)}$ where $C'_k = 1$ if $k$ is non-archimedean. 
\par 
Plugging it in to the above inequality and absorbing constants, since $|\Res(f)| \leq \frac{1}{2} |\res(f)|$, we obtain 
$$\sum_{i \not = j} g_f(z_i,z_j) \geq -(C_k + \log |\res(f)|)^{-1} O_d(N \log N)$$
where $C_k = 1$ if $k$ is archimedean and $0$ otherwise as desired.
\end{proof}

We note it is possible to generalize Proposition \ref{ArakelovSumLowerBound1} to higher dimensions by the work of Looper \cite{Loo24} and Ingram \cite{Ing22}. Now if $K$ is a global field and $M \subseteq M_K$ an arbitrary subset of places, we may sum over all places of $K$ to obtain the following bound.

\begin{proposition} \label{ArakelovSumLowerBound2}
Let $K$ be a global field. Then for a rational map $f: \bb{P}^1 \to \bb{P}^1$ of degree $d \geq 2$ and $z_1,\ldots,z_N \in \bb{P}^1(K)$, for any subset $M \subseteq M_K$, we have 
$$\sum_{v \in M} \sum_{i \not = j} N_v g_f(z_i,z_j) \geq (-C_K + h_{\res}(f)) O_d(N \log N)$$
where $C_K = 1$ if $K$ is a number field and is $0$ if $K$ is a function field.
\end{proposition}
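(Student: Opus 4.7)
The plan is to bootstrap the local inequality of Proposition \ref{ArakelovSumLowerBound1} up to a global one by simply summing it over the places $v \in M$ weighted by $N_v$. For each $v \in M$, I pass to the completion $\bb{C}_v$ of the algebraic closure of $K_v$, which is a complete algebraically closed valued field, and apply Proposition \ref{ArakelovSumLowerBound1} to $f$ viewed in $\Rat_d(\bb{C}_v)$ and to the tuple $z_1,\ldots,z_N \in \bb{P}^1(K) \subseteq \bb{P}^1(\bb{C}_v)$. This yields
$$\sum_{i \not = j} g_{f,v}(z_i, z_j) \geq -(C_v + \log|\res(f)|_v^{-1}) \cdot O_d(N \log N),$$
with $C_v = 1$ when $v$ is archimedean and $C_v = 0$ otherwise. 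One should check, by inspection of the proof of Proposition \ref{ArakelovSumLowerBound1}, that the implied $O_d$-constant truly depends only on $d$ and not on $v$: this is so because it is built from Baker's combinatorial estimate on $d$-ary sums and the uniform resultant bound of Proposition \ref{BoundRadius1}, neither of which sees the place.

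Multiplying this local inequality by $N_v$ and summing over $v \in M$ splits the right-hand side into an ``archimedean constant'' piece and a ``resultant'' piece. For the first, $\sum_{v \in M} N_v C_v \leq \sum_{v \in M_K^\infty} N_v$, which equals $1$ for a number field by the normalization of the $N_v$ (equivalently, the product formula) and is $0$ for a function field, so this piece is at most $C_K$. For the second, since $|\res(f)|_v \leq 1$ at every non-archimedean $v$, all non-archimedean terms are non-negative, so enlarging the sum from $M$ to all of $M_K$ only increases it; bounding the (possibly negative) archimedean terms above by their positive parts gives
$$\sum_{v \in M} N_v \log|\res(f)|_v^{-1} \leq \sum_{v \in M_K} N_v \log^+ |\res(f)|_v^{-1} = h_{\res}(f).$$
Combining the two pieces with the common factor $O_d(N \log N)$ produces the claimed bound $-(C_K + h_{\res}(f)) O_d(N \log N)$.

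There is no serious obstacle here: all of the substantive content is already packaged into Proposition \ref{ArakelovSumLowerBound1}, and the present proposition is a bookkeeping exercise of assembling local inequalities into a global one. The only subtle points to be verified are (a) that the $O_d$-constant in Proposition \ref{ArakelovSumLowerBound1} is genuinely uniform across places, and (b) that enlarging $M$ to $M_K$ and replacing $\log$ by $\log^+$ in the resultant sum preserves the inequality direction, which is immediate from $|\res(f)|_v \leq 1$ at non-archimedean places.
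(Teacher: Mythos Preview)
Your proposal is correct and follows essentially the same approach as the paper: sum the local inequality of Proposition \ref{ArakelovSumLowerBound1} over the places in $M$, weighted by $N_v$, and collect the archimedean and resultant contributions into $C_K$ and $h_{\res}(f)$ respectively. The paper's proof is a single sentence that phrases the good-reduction places slightly differently (invoking $g_{f,v}\geq 0$ directly rather than observing that Proposition \ref{ArakelovSumLowerBound1} already gives a zero lower bound there), but the content is identical; your write-up simply spells out the bookkeeping that the paper leaves implicit.
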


\begin{proof}
This follows immediately by summing up the inequalities obtained for each place $v$ from Proposition \ref{ArakelovSumLowerBound1}, where any place of potential good reduction has a lower bound of $\geq 0$.     
\end{proof}

\section{Degeneration of Sequences of Rational Maps}
\subsection{The archimedean case} \label{sec: Archimedean}
We will construct our degeneration of sequences of rational maps over non-archimedean fields, following Favre--Gong \cite{FG24}. We first recall their construction for sequences over $\bb{C}$. We first start with the notion of an ultraftiler $\omega$. An ultrafilter is a subset of $P(\bb{N})$ satisfying:

\begin{enumerate}
\item If $A,B \in \omega$ then $A \cap B \in \omega$;
\item $\bb{N} \in \omega$ and $\emptyset \not \in \omega$;
\item For any $A \subseteq \bb{N}$, either $A$ or $\bb{N} \setminus A$ is $\in \omega$.
\end{enumerate}

The set of all ultrafilters is denoted by $\beta \bb{N}$. The topology of $\beta \bb{N}$ is generated by an open set $U_E = \{ \omega \mid E \in \omega\}$ for each subset $E \in \beta \bb{N}$. This makes $\beta \bb{N}$ into a compact Hausdorff topological space. Given a natural number $n$, we can define an ultraftiler, which we will denote by $n \in \beta \bb{N}$, by taking the collection of all subsets that contain $n$. Such a ultrafilter is called a principal ultrafilter. It is a consequence of Zorn's lemma that non-principal ultrafilters exist. A subset of $\bb{N}$ that is an element of $\omega$ will be called a $\omega$-large set.  
\par 
Given a topological space $X$ and a sequence $(x_n) \in X$, we say that $y$ is a $\omega$-limit of $x$ if for any open neighbourhood $U$ of $y$, the set $\{n \mid x_n \in U \}$ is $\omega$-large. It is a fact that if $X$ is compact, then a $\omega$-limit always exists and if $X$ is Hausdorff, the $\omega$-limit is unique. Note that for any $\omega$-large set $E$, the $\omega$-limit of $(x_n)$ depends only on the indices $n \in E$. 
\par 
Now let $(\eps_n)$ be a sequence of real numbers in $(0,1]$. We consider the Banach ring 
$$A^{\eps} = \left\{ (x_n) \in \prod_{n=1}^{\infty} \bb{C} \mid |x_n|^{\eps_n} \text{ is bounded}.\right\}$$
with the norm given by $(x_n) \mapsto \sup |x_n|^{\eps_n}$. We let $\cal{M}(A^{\eps})$ denote the Berkovich spectrum of $A^{\eps}$, which consists of all bounded multiplicative semi-norms on $A^{\eps}$, where bounded implies that there is a constant $C$ such that our multiplicative semi norm $| \cdot |$ is $\leq C || \cdot ||$ where $|| \cdot ||$ is the norm we chose for our Banach ring. We give $\cal{M}(A^{\eps})$ the weakest topology such that $| \cdot | \mapsto |f|$ is continuous for any $f \in A^{\eps}$.  
\par 
Given an ultrafilter $\omega$, we can associate to it a bounded multiplicative semi-norm $| \cdot |_{\omega}$ on $A^{\eps}$ via 
$$|(x_n)|_{\omega} = \lim_{\omega} |x_n|^{\eps_n}$$
where we view $(|x_n|^{\eps_n})$ as a sequence of reals in $[0,1]$ and hence the $\omega$-limit always exists. It turns out that this gives a homeomorphism between $\beta \bb{N}$ and $\cal{M}(A^{\eps})$ \cite[Theorem 3.8]{FG24}. 
\par 
Given a point $\omega \in \cal{M}(A^{\eps})$, we get an associated residue field $\cal{H}(\omega) = A^{\eps}/\ker(\omega)$ where 
$$\ker(\omega) = \{ (x_n) \in A^{\eps} \mid \lim_{\omega} |x_n|^{\eps_n} = 0.\}$$
The field $\cal{H}(\omega)$ is algebraically closed and is complete with respect to the induced norm $| \cdot |_{\omega}$. For a natural number $n$, the field $\cal{H}(n)$ is isomorphic to $(\bb{C}, | \cdot |^{\eps_n})$. For a ultrafilter $\omega$ in general, if $\lim_{\omega} |x_n|^{\eps_n} = e$ is a real number, then $\cal{H}(\omega) \simeq (\bb{C}, | \cdot |^e)$. But if $\lim_{\omega} |x_n|^{\eps_n} = 0$, then $\cal{H}(\omega)$ is a non-archimedean field. 
\par 
We now construct the Berkovich projective line over $A^{\eps}$. Just as in the case over a field $k$, we can define $\berkA_{A^{\eps}}$ as the set of all multiplicative seminorms on $A^{\eps}[T]$ whose restriction to $A^{\eps}$ is an element of $\cal{M}(A^{\eps})$. Then there is a natural projection map $\pi: \berkA_{A^{\eps}} \to \cal{M}(A^{\eps})$ whose preimage is exactly $\berkA_{\cal{H}(\omega)}$. Then we can construct $\berkP_{A^{\eps}}$ by adding $\{\infty\}$ to each fiber so that each fiber is now $\berkP_{\cal{H}(\omega)}$. 
\par 
Every homogeneous degree $d$ polynomial $P(z_0,z_1) \in A^{\eps}[z_0,z_1]$ defines for us a function $\psi_P$ on $\berkP_{A^{\eps}}$ by 
$$| \cdot |\mapsto \frac{|P(z_0,z_1)|}{\max\{|z_0|^d, |z_1|^d\}}$$
and the topology on $\berkP_{A^{\eps}}$ is the weakest topology such that every $\psi_P$ is a continuous function. Note that the induced topology on each fiber $\berkP_{\cal{H}(\omega)}$ is the same as its Berkovich topology. 
\par 
We thank Gong for the following argument that $\berkP_{A^{\eps}}$ is compact. It is covered by two copies of the unit discs $\ovl{\bb{D}}_{A^{\eps}}$ and each unit disc is homeomorphic to $\cal{M}(A^{\eps}[T])$ where we give $A^{\eps}[T]$ the Gauss norm, i.e. 
$$|\sum_{i=1}^{n} c_i T^i| = \max_{1 \leq i \leq n} |c_i|.$$
Then \cite[Theorem 1.2.1]{Ber90} tells us that $\cal{M}(A^{\eps}[T])$ is compact and hence $\berkP_{A^{\eps}}$ is compact as it is the union of two compact sets. 
\par 
Now given a sequence of rational maps $(f_n)$ over $\bb{C}$ of degree $d \geq 2$, we may pick lifts $F_n = (P_n,Q_n)$ such that $|F_n| = 1$. Then there exists a constant $C_d> 0$ depending only on $d$ such that $\Res(F_n) \leq C_d$. We now choose 
$$\eps_n = \left(-\log \frac{|\Res(F_n)|}{eC_d} \right)^{-1}$$ 
so that $0 \leq e_n \leq 1$. Then the sequences $P = (P_n),Q = (Q_n)$ gives us homogeneous degree $d$ polynomials over $A^{\eps}$, where the coefficient of $z^i$ is the sequence given by the coefficients of $z^i$ for $P_n$ and $Q_n$, and similarly we obtain $F = (F_n)$. Observe that 
$$|\Res(F)| = \lim_{\omega} |\Res(F_n)|^{\eps_n} \geq \bf{e}^{-1}$$
and so $\Res(F) \in (A^{\eps})^{\times}$ and in particular $F$ is the lift of some rational map $f$ on $\berkP_{A^{\eps}}$ of degree $d$. For any $\omega \in \cal{M}(A^{\eps})$, we get an induced map $f_{\omega}$ on $\berkP_{\cal{H}(\omega)}$ which can be viewed as the $\omega$-limit of $(f_n)$ for the parameters $(\eps_n)$. 
\par 
Now assume that $(f_n)$ is a sequence of rational maps over $\bb{C}$ such that $\Res(f_n) = \res(f_n)$. Then \cite[Theorem 3.17]{FG24} tells us that if $\lim_{\omega} |x_n|^{\eps_n} = 0$, we necessarily have $|\res(f_{\omega})| < 1$ and hence $f_{\omega}$ has bad reduction for $\cal{H}(\omega)$, i.e. there is no conjugate of $f_{\omega}$ that extends to a morphism over the valuation ring $O_{\cal{H}(\omega)} = \{x \in \cal{H}(\omega) \mid |x|_{\omega} \leq 1\}.$ 

\subsection{The Non-Archimedean Case} \label{sec: NonArchimedean}
We now move to the setting of non-archimedean fields. We will not restrict ourselves to just a sequence of rational maps over a fixed field $K$ but instead also let the base field vary. Let $(K_n, | \cdot |_n)$ be a sequence of non-archimedean fields which are complete and algebraically closed and let $\eps_n > 0$ be a sequence of positive reals. Observe that we allow $\eps_n$ to be $> 1$ as for non-archimedean fields, $| \cdot |^{\eps}$ still remains a norm for any positive real $\eps$ unlike the case for archimedean fields where we require $e \leq 1$ for the triangle inequality to hold. 
\par 
As in the archimedean case, we define 
$$A^{\eps} = \left\{ (x_n) \in \prod_{n=1}^{\infty} K_n \mid |x_n|^{\eps_n} \text{ is bounded.} \right\}$$
By \cite[Proposition 1.2.3]{Ber90}, we still have $\cal{M}(A^{\eps}) \simeq \beta \bb{N}$ where the map is again
$$\omega \mapsto \left((x_n) \mapsto \lim_{\omega} |x_n|^{\eps_n} \right).$$
Given $\omega \in \cal{M}(A^{\eps})$, we obtain a residue field $\cal{H}(\omega) = A^{\eps}/\ker(\omega)$. 

\begin{proposition} \label{NonArchDegen1}
The field $\cal{H}(\omega)$ is non-archimedean with respect to the induced metric $| \cdot |_{\omega}$. It is also complete and algebraically closed.
\end{proposition}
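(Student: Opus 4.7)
The plan is to verify the three assertions separately: the strong triangle inequality, completeness, and algebraic closure.

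For the non-archimedean property, I work with representatives $x = (x_n), y = (y_n) \in A^\eps$. Each $|\cdot|_n$ satisfies the ultrametric inequality, and since $\eps_n > 0$ makes $t \mapsto t^{\eps_n}$ monotonic on $[0,\infty)$, we get $|x_n + y_n|_n^{\eps_n} \leq \max(|x_n|_n^{\eps_n}, |y_n|_n^{\eps_n})$ term by term. Passing to the $\omega$-limit (which preserves weak inequalities between bounded real sequences) yields $|x+y|_\omega \leq \max(|x|_\omega, |y|_\omega)$.

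For completeness, given a Cauchy sequence $(\bar{\xi}_k)$ in $\cal{H}(\omega)$, pass to a subsequence with $|\bar{\xi}_{k+1} - \bar{\xi}_k|_\omega < 2^{-k-1}$ and choose representatives $(x_{k,n})_n \in A^\eps$. By definition of the ultralimit, each set $E_k := \{n : |x_{k+1,n} - x_{k,n}|_n^{\eps_n} < 2^{-k}\}$ belongs to $\omega$, so the nested intersections $F_k := E_1 \cap \cdots \cap E_k$ remain $\omega$-large. Put $k(n) = \sup\{k : n \in F_k\}$ and define a diagonal sequence $y_n = x_{k(n)+1, n}$ (with $y_n = x_{1,n}$ if $n \notin E_1$). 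Using the strong triangle inequality on $K_n$ to telescope the differences, one checks that $(y_n) \in A^\eps$ and that, for every fixed $K$, one has $|y_n - x_{K+1,n}|_n^{\eps_n} < 2^{-K+1}$ on the $\omega$-large set $F_K$, whence $|\bar{y} - \bar{\xi}_{K+1}|_\omega \leq 2^{-K+1}$. Thus $\bar{\xi}_k \to \bar{y}$ in $\cal{H}(\omega)$.

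For algebraic closure, let $P(T) = T^d + a_{d-1} T^{d-1} + \cdots + a_0 \in \cal{H}(\omega)[T]$ be monic with $d \geq 1$. Lift each coefficient to $(a_{i,n})_n \in A^\eps$, obtaining monic polynomials $P_n \in K_n[T]$. Since $K_n$ is algebraically closed, pick a root $r_n \in K_n$ of $P_n$. The standard non-archimedean root bound gives $|r_n|_n \leq \max_{0 \leq i < d} |a_{i,n}|_n^{1/(d-i)}$, hence
$$|r_n|_n^{\eps_n} \leq \max_{0 \leq i < d} \bigl(|a_{i,n}|_n^{\eps_n}\bigr)^{1/(d-i)}.$$
Boundedness of each $(|a_{i,n}|_n^{\eps_n})_n$ forces boundedness of $(|r_n|_n^{\eps_n})_n$, so $r := (r_n) \in A^\eps$. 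Multiplicativity of $|\cdot|_\omega$ then gives $|P(\bar{r})|_\omega = \lim_\omega |P_n(r_n)|_n^{\eps_n} = 0$, so $\bar{r}$ is a root of $P$ in $\cal{H}(\omega)$.

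The main obstacle is the completeness step, where one must construct a single diagonal representative in $A^\eps$ that simultaneously approximates every $\bar{\xi}_k$; the ultrametric is essential here because it converts what would otherwise be a sum of telescoping errors into a single maximum, so no completeness hypothesis on the individual $K_n$ is actually invoked. The remaining two parts are routine ultralimit manipulations, modulo recalling the classical non-archimedean root bound.
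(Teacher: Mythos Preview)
Your argument is correct in substance and, for the non-archimedean and algebraic-closure parts, essentially matches the paper (the paper checks $|m|_\omega\le 1$ for all $m\in\bb N$ rather than verifying the ultrametric inequality directly, but these are equivalent one-line arguments; the algebraic-closure step is identical, the paper phrasing the root bound as a Newton polygon estimate).

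The completeness step is where you and the paper genuinely diverge. The paper dispatches it in one line: $\ker(|\cdot|_\omega)$ is a closed ideal of the Banach ring $A^\eps$, so the quotient is automatically complete. Your explicit diagonal construction works too, and has the virtue of being self-contained, but as written it has a small gap: your index $k(n)=\sup\{k:n\in F_k\}$ can be $+\infty$ when $n\in\bigcap_k F_k$, leaving $y_n$ undefined. The easy fix is to truncate, say $k(n)=\max\{k\le n:n\in F_k\}$; then for any fixed $K$ the $\omega$-large set $F_K\setminus\{1,\dots,K-1\}$ still witnesses $k(n)\ge K$, and the rest of your telescoping goes through. With that adjustment your remark that completeness of the individual $K_n$ is never used is indeed correct, which is a nice observation not made explicit in the paper.
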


\begin{proof}
For $m \in \bb{N}$, we clearly have 
$$\lim_{\omega} |m|^{\eps_n} \leq \sup |m|^{\eps_n} \leq 1$$
as each $K_n$ is non-archimedean. Hence $| \cdot |_{\omega}$ is non-archimedean. Completeness follows from the fact that $\ker(\omega)$ is a closed subspace of $A^{\eps}$. To show $\cal{H}(\omega)$ is algebraically closed, we pick a monic polynomial $P_{\omega}[z] \in \cal{H}(\omega)[z]$. We may lift it to a monic polynomial $P$ over $A^{\eps}$ and hence a sequence of monic polynomials $(P_n)$ where $P_n = z^d + a_{n,1} z^{d-1} + \cdots + a_{n,d}$. We may factorize each $P_n$ as $\prod_{i=1}^{d} (z - \alpha_{n,i})$ where $\alpha_{n,i} \in K_n$. Then the sequence $(\alpha_{n,1})$ is an element of $A^{\eps}$ as by Newton polygons, we must have  
$$|\alpha_{n,1}| \leq \max_{1 \leq j \leq d} |a_{n,j}|^{1/j}$$
and so $|\alpha_{n,1}|^{\eps_n}$ is bounded. It follows that $(\alpha_{n,1})$ is a zero of $P(z)$ and hence is a zero of $P_{\omega}[z]$ as desired.
\end{proof}

We record some examples of $\cal{H}(\omega)$ for some sequences of fields $(K_n)$ which we are not necessarily algebraically closed. Let's say for all $n$ we have $K_n = K$ is a  field with residue characteristic $p > 0$ and that $\eps_n \to \infty$. Then $\displaystyle \lim_{\omega} |p|^{\eps_n} = 0$ and so $\cal{H}(\omega)$ has characteristic $p$.

\begin{proposition} \label{Example1}
Assume that the valuation of $K$ is discrete and that the residue field $k = O_K/\fra{m}$ is finite. Then if $\eps_n \to \infty$, for a non-principal ultrafilter $\omega$, we have $\cal{H}(\omega) = k$.
\end{proposition}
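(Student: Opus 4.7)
The plan is to exhibit an explicit isomorphism $A^{\eps}/\ker(\omega) \xrightarrow{\sim} k$ via reduction modulo the maximal ideal, together with the $\omega$-limit in the finite residue field. Let $\pi \in O_K$ be a uniformizer, so $|K^{\times}| = |\pi|^{\bb{Z}}$.

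The crucial first step is to show that for any $(x_n) \in A^{\eps}$, the set $S = \{n : x_n \notin O_K\}$ is finite. Indeed, if $|x_n| > 1$ then discreteness forces $|x_n| \geq |\pi|^{-1} > 1$, so $|x_n|^{\eps_n} \geq |\pi|^{-\eps_n}$; since $\eps_n \to \infty$, this quantity tends to $+\infty$, contradicting the boundedness of $(|x_n|^{\eps_n})_n$ unless $S$ is finite. In particular, modifying $(x_n)$ on $S$ to land in $O_K$ produces a sequence in $\prod_n O_K$ that differs from the original on only finitely many indices, hence agrees with it on every $\omega$-large set.

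Next I would define $\phi: A^{\eps} \to k$ by modifying $(x_n)$ as above, reducing each coordinate to $\ovl{x_n} \in k$, and taking the $\omega$-limit in $k$, which exists since $k$ is finite (hence compact and Hausdorff). Non-principality of $\omega$ ensures the finite modification is irrelevant, and $\phi$ is plainly a ring homomorphism. The computation $\ker\phi = \ker(\omega)$ is then immediate: $\phi((x_n)) = 0$ iff the set $\{n : \ovl{x_n} = 0\} = \{n : |x_n| \leq |\pi|\}$ is $\omega$-large, which, because $|x_n|^{\eps_n} \leq |\pi|^{\eps_n} \to 0$ on such a set, is equivalent to $\lim_{\omega} |x_n|^{\eps_n} = 0$. (The reverse implication uses that $\lim_{\omega} |x_n|^{\eps_n} = 0$ forces $|x_n| < 1$, and hence $|x_n| \leq |\pi|$, on an $\omega$-large set.) Surjectivity of $\phi$ is evident from the constant sequence $(\tilde\alpha)_n$ for any lift $\tilde\alpha \in O_K$ of $\alpha \in k$, since such a sequence lies in $A^{\eps}$ with $|\tilde\alpha|^{\eps_n} \leq 1$. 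Passing to the quotient yields the ring isomorphism $\cal{H}(\omega) \cong k$, and since the induced norm $|\cdot|_{\omega}$ visibly takes only the values $0$ and $1$, it matches the trivial absolute value on the finite field $k$.

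The main obstacle is precisely the first step: it is the interaction between discreteness of the valuation (which enforces a uniform gap $|\pi|^{-1} > 1$ above $1$) and the hypothesis $\eps_n \to \infty$ (which amplifies this gap) that collapses the problem, ruling out contributions from outside $O_K$ and forcing all nontrivial information to be captured by residues in the finite field $k$. Once that observation is in hand, the remainder is a routine unwinding of the definitions of ultrafilter limits and residue reduction.
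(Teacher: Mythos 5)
Your argument is correct and rests on exactly the same ingredients as the paper's proof: discreteness plus $\eps_n \to \infty$ forces $|x_n| \leq 1$ for almost all $n$ and sends anything of norm $<1$ into $\ker(\omega)$, while finiteness of $k$ together with the ultrafilter (your $\omega$-limit in $k$ is the paper's pigeonhole on an $\omega$-large set) pins down the residue. The only difference is packaging: you build the surjection $A^{\eps} \to k$ with kernel $\ker(\omega)$ and invoke the first isomorphism theorem, whereas the paper constructs the inverse map $k \to \cal{H}(\omega)$ from constant lift sequences and checks injectivity and surjectivity directly.
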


\begin{proof}
For each $x \in k$, pick a lift $\tilde{x} \in O_K$. Then we can form the constant sequence $(\tilde{x}, \tilde{x},\ldots)$ which maps into $\cal{H}(\omega)$, giving us an injective map $f: k \to \cal{H}(\omega)$. We claim that this is a field homomorphism. Given $x,y \in k$, let $z = x+y$. We have to check that $(\tilde{x}) + (\tilde{y}) = (\tilde{z})$ in $\cal{H}(\omega)$. But $|\tilde{z} - \tilde{x} - \tilde{y}| < 1$ and so $|\tilde{z} - \tilde{x} 
- \tilde{y}|^{\eps_n} \to 0$, which means $(\tilde{x}) + (\tilde{y}) = (\tilde{z})$ in $\cal{H}(\omega)$. Similarly if $xy = w$, we have $(\tilde{x} \cdot \tilde{y}) = (\tilde{w})$ and so we get a ring homomorphism. 
\par 
It suffices to show that $f$ is surjective. Pick any $a = (a_n) \in \cal{H}(\omega)$. Since $|a_n|^{\eps_n}$ is bounded, we must have $|a_n| \leq 1$ for all sufficiently large $n$. Now as $k$ is finite, there must exists a $\omega$-large subset $E \subseteq \bb{N}$ for which $|a_n - \tilde{x}| < 1$ for some fixed $x \in k$ and for all $n \in E$. Then $\lim_{\omega} |a_n - \tilde{x}|^{\eps_n} = 0$ and so $a = (\tilde{x})$, implying that $f$ is an isomorphism as desired. 
\end{proof}

On the other hand if $| \cdot |$ were not discrete, for example if $K = \widehat{\bb{Q}_p(p^{1/p^{\infty}})}$ is a perfectoid field and say $\eps_n = p^n$, then $\cal{H}(\omega)$ is of characteristic $p$ and the sequence $(p^{1/p^n})$ defines for us an element with norm $\frac{1}{p}$. This bears a resemblance to the tilt of $K$.  
\par 
In the archimedean case where $K_n = \bb{C}$, if say $\eps_n$ is the constant sequence $1$, then $\cal{H}(\omega)$ is simply $\bb{C}$ again, as shown by \cite[Theorem 3.17]{FG24}. This is due to the set $\{z \in \bb{C} \mid |z| \leq C\}$ being compact for any constant $C > 0$. On the other hand if $K_n = \bb{C}_p$ and $\eps_n = 1$, then $\cal{H}(\omega)$ is much larger than $\bb{C}_p$. For example, any sequence $(x_n)$ with $|x_i - x_j| = 1$ will correspond to a point in $\cal{H}(\omega)$. 
\par 
We can now form the Berkovich projective line $\berkP_{A^{\eps}}$ over $A^{\eps}$ where again we have a projection map $\pi: \berkP_{A^{\eps}} \to \cal{M}(A^{\eps})$ whose fiber $\pi^{-1}(\omega)$ is isomorphic to $\berkP_{\cal{H}(\omega)}$. 
\par 
Now let $(f_n)$ be a sequence of rational maps over $K_n$ of degree $d \geq 2$. We let $\eps_n = (-\log |\Res(f_n)|)^{-1}$, where as $|\Res(f_n)| \leq 1$, this is a positive real number. We pick lifts $F_n = (P_n,Q_n)$ such that $|F_n| = 1$. Then as in the archimedean case, we obtain a rational map $f$ of degree $d$ over $A^{\eps}$ with homogeneous lift given by $F = (P,Q)$, where $P = (P_n)$ and $Q = (Q_n)$ and $|\Res(F)| \in (A^{\eps})^{\times}$. 
\par 
Let us further assume that $|\res(f_n)| < 1$, i.e. $f_n$ has bad reduction over $K_n$, and that $2^{1/\eps_n} |\Res(f_n)| \geq  |\res(f_n)|$. In this case, we claim that $f_{\omega}$ always has bad reduction over $\cal{H}(\omega)$. 

\begin{proposition} \label{BadReductionDegeneration1}
We have $|\res(f_{\omega})| < 1$, i.e. $f_{\omega}$ has bad reduction over $\cal{H}(\omega)$.
\end{proposition}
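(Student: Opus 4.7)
The plan is to prove $|\res(f_\omega)| \leq 2/e < 1$ by bounding $|\Res(\vphi \circ f_\omega \circ \vphi^{-1})|$ uniformly over $\vphi \in \GL_2(\cal{H}(\omega))$. Given such a $\vphi$, I would first lift it to a sequence $(\vphi_n) \in \GL_2(K_n)$ using the surjection $A^{\eps} \to \cal{H}(\omega)$: pick a matrix representative of $\vphi$ with entries in $\cal{H}(\omega)$, lift each entry to $A^{\eps}$, and take $\vphi_n \in \mathrm{M}_2(K_n)$ to be the resulting matrices. Since $\det \vphi \neq 0$ in $\cal{H}(\omega)$, we have $\lim_{\omega} |\det \vphi_n|^{\eps_n} > 0$, hence $\vphi_n \in \GL_2(K_n)$ on an $\omega$-large set, which I may assume is all of $\bb{N}$.

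Now set $g_n := \vphi_n \circ f_n \circ \vphi_n^{-1}$, a degree $d$ rational map over $K_n$, and pick normalized homogeneous lifts $G_n$ with $|G_n| = 1$. A pigeonhole argument (as for $|F|_\omega = 1$ in the construction of $f_\omega$) forces one coordinate of $G_n$ to attain the maximum norm on an $\omega$-large subset, so $G = (G_n) \in A^{\eps}[z_0, z_1]$ is a degree $d$ lift with $|G|_\omega = 1$. Because conjugation is a polynomial operation on coefficients and the residue field map $A^{\eps} \to \cal{H}(\omega)$ is a ring homomorphism, the rational map cut out by $G$ over $\cal{H}(\omega)$ is exactly $g_\omega := \vphi \circ f_\omega \circ \vphi^{-1}$, and therefore $|\Res(g_\omega)| = |\Res(G)|_\omega/|G|_\omega^{2d+2} = \lim_\omega |\Res(g_n)|^{\eps_n}$.

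The key estimate then combines three inputs: $|\Res(g_n)| \leq |\res(f_n)|$ by the definition of the minimal resultant, $|\res(f_n)| \leq 2^{1/\eps_n}|\Res(f_n)|$ by hypothesis, and $|\Res(f_n)|^{\eps_n} = e^{-1}$ from the choice $\eps_n = (-\log|\Res(f_n)|)^{-1}$. Chaining these gives $|\Res(g_n)|^{\eps_n} \leq 2/e$, so taking $\omega$-limits yields $|\Res(g_\omega)| \leq 2/e < 1$. Since $\vphi$ was arbitrary, $|\res(f_\omega)| \leq 2/e < 1$, giving bad reduction.

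The main subtle point is the second paragraph: verifying that forming $\omega$-limits of rational maps commutes with the $\PGL_2$-conjugation action, i.e.\ that the normalized lift $G_n$ of $g_n$ assembles into an element of $A^{\eps}[z_0, z_1]$ whose image in $\cal{H}(\omega)[z_0,z_1]$ recovers $g_\omega$. This should be essentially formal since conjugation is algebraic on $\Rat_d \subset \bb{P}^{2d+1}$, but requires care with coefficient normalization: the raw polynomial $\vphi_n \circ F_n \circ \vphi_n^{-1}$ need not have sup-norm $1$ and must be rescaled by a factor controlled by $\det \vphi_n$, which is why we rely on the intrinsic normalized $|\Res(g_n)|$ rather than the resultant of any particular lift.
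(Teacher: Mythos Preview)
Your proof is correct and follows essentially the same approach as the paper's: lift $\vphi$ to a sequence $(\vphi_n)$, use the compatibility of $\Res$ with $\omega$-limits to identify $|\Res(\vphi \circ f_\omega \circ \vphi^{-1})|$ with $\lim_\omega |\Res(\vphi_n \circ f_n \circ \vphi_n^{-1})|^{\eps_n}$, and then chain $|\Res(g_n)| \leq |\res(f_n)| \leq 2^{1/\eps_n}|\Res(f_n)|$ together with $|\Res(f_n)|^{\eps_n} = e^{-1}$ to obtain the bound $2/e$. The paper phrases this as a proof by contradiction (assuming some $\vphi_\omega$ achieves $|\Res| = 1$) and is terser about the compatibility step you flag as the ``main subtle point,'' but the substance is identical.
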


\begin{proof}
Otherwise, there exists $\vphi_{\omega} \in \GL_2(\cal{H}(\omega))$ such that $|\Res(\vphi_{\omega} \circ f_{\omega} \circ \vphi_{\omega}^{-1})| = 1$. We may lift $\vphi_{\omega}$ to some element $\vphi \in M_2(A^{\eps})$. Since $|\det(\vphi_{\omega})| = \lim_{\omega} |\det(\vphi_n)|^{\eps_n}$, we know for an $\omega$-large set $E$, we have $\vphi_n \in \GL_2(K_n)$ for $n \in E$. Hence we may consider $\vphi_n \circ f_n \circ \vphi_n^{-1}$ and clearly we have 
$$\lim_{\omega} |\Res(\vphi_n \circ f_n \circ \vphi_n^{-1})|^{\eps_n} = |\Res(\vphi_{\omega} \circ f_{\omega} \circ \vphi_{\omega}^{-1})| = 1.$$
But we have
$$\lim_{\omega} |\Res(\vphi_n \circ f_n \circ \vphi_n^{-1})|^{\eps_n} \leq \lim_{\omega} |\res(f_n)|^{\eps_n} \leq \lim_{\omega} \left(2^{1/\eps_n} |\Res(f_n)| \right)^{\eps_n}$$
$$= 2 \cdot \lim_{\omega} |\Res(f_n)|^{\eps_n} = \frac{2}{e} < 1.$$
This is a contradiction. Hence $|\res(f_{\omega})| < 1$ and $f_{\omega}$ has bad reduction as desired.
\end{proof}

Recall that Favre--Gong defined model functions as follows. For any homogeneous polynomial $P \in A^{\eps}[z_0,z_1]$ of degree $l$, we may set
$$\log \psi_P = \log |P(z_0,z_1)| - l \log \max\{|z_0|,|z_1|\}.$$
For any finite collection of sections $\sigma \subset H^0(\bb{P}^1_{A^{\eps}}, O(l))$, we may set
$$\vphi_{\sigma} = \log \max_{P_i \in \sigma} \psi_{P_i}.$$
If the $P_i$'s do not have a common zero, then $\vphi_{\sigma}$ is a function from $\berkP_{A^{\eps}} \to \bb{R}$ and we say that $\vphi_{\sigma}$ is a model function. By \cite[Theorem 4.8]{FG24}, the model functions are dense in the space of continuous functions as we may take $b_1 = (x_n)$ where each $x_n \in K_n$ is an element satisfying $\frac{1}{2} \leq |x_n|^{\eps_n} < 1$, which always exists due to non-discreteness of $|K_n^{\times}|$.
\par 
We now make an important observation, which is that the induced norm on $\pi^{-1}(n) \simeq \berkP_{K_n}$ is not $| \cdot |_n$ but instead $| \cdot |_n^{\eps_n}$. In particular when evaluating $\log \psi_P$ on $\pi^{-1}(n)$, other than replacing $| \cdot |$ with $| \cdot |_n$, we should also add an extra scaling factor of $\eps_n$. It will be convenient to set $\eps_{\omega} = 1$ for non-principal ultrafilters, so that for all ultrafilters $\omega \in \beta \bb{N}$ we have the following relation 
$$\lim_{\omega} \eps_n \left(\log |P(z_0,z_1)|_n - l \log \max \{|z_0|,|z_1|\} \right) $$
$$= \eps_{\omega} \left(\log |P(z_0,z_1)|_{\omega} - l \log \max\{ |z_0|_{\omega},|z_1|_{\omega}\}\right).$$
\par 
We now give a criterion to check if a classical point $x \in \bb{P}^1(\cal{H}(\omega))$ is the $\omega$-limit of a sequence $(x_n) \in \bb{P}^1(K_n)$. 

\begin{proposition} \label{LimitCriterion1}
Let $x \in \bb{P}^1(\cal{H}(\omega))$ and let $(x_n) \in \bb{P}^1(K_n)$. Then $x = \lim_{\omega} x_n$ if and only if $\vphi(x) = \lim_{\omega} \vphi(x_n)$ for all model functions $\vphi$. 
\end{proposition}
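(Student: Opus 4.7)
The plan is to prove both implications, with the forward direction following from continuity and the reverse direction using compactness of $\berkP_{A^{\eps}}$ together with the density of model functions.

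For the forward direction, I would first observe that every model function is continuous on $\berkP_{A^{\eps}}$. Indeed, for each homogeneous polynomial $P \in A^{\eps}[z_0, z_1]$ the function $\psi_P$ is continuous by the very definition of the topology on $\berkP_{A^{\eps}}$ (which is the weakest one making all such $\psi_P$ continuous). Since a model function has the form $\vphi_{\sigma} = \log \max_i \psi_{P_i}$ for a finite collection $\sigma = \{P_i\}$ with no common zero, the maximum is strictly positive on $\berkP_{A^{\eps}}$, so $\log$ is continuous there, and hence $\vphi_{\sigma}$ is continuous. The displayed relation just before the proposition ensures that evaluation at $x_n$ in the fiber $\pi^{-1}(n)$ incorporates the $\eps_n$-rescaling correctly, so continuity of $\vphi$ together with $x = \lim_{\omega} x_n$ yields $\vphi(x) = \lim_{\omega} \vphi(x_n)$ immediately.

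For the reverse direction, assume $\vphi(x) = \lim_{\omega} \vphi(x_n)$ for every model function $\vphi$. Since $\berkP_{A^{\eps}}$ is compact (as shown in the excerpt by covering with two Berkovich unit discs $\ovl{\bb{D}}_{A^{\eps}} \simeq \cal{M}(A^{\eps}[T])$), the sequence $(x_n)$ admits an $\omega$-limit $y \in \berkP_{A^{\eps}}$. Continuity of the projection $\pi: \berkP_{A^{\eps}} \to \cal{M}(A^{\eps})$ forces $\pi(y) = \lim_{\omega} n = \omega$, so $y \in \pi^{-1}(\omega) \simeq \berkP_{\cal{H}(\omega)}$, placing $y$ in the same fiber as $x$. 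Applying the forward direction to $y$ in place of $x$, we obtain $\vphi(y) = \lim_{\omega} \vphi(x_n) = \vphi(x)$ for every model function $\vphi$.

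To conclude, I would invoke the density of model functions in $C(\berkP_{A^{\eps}})$ established in \cite[Theorem 4.8]{FG24}, whose hypothesis on the existence of a section with norm between $1/2$ and $1$ is satisfied because each $|K_n^{\times}|$ is non-discrete (or because we can choose $b_1 = (x_n)$ appropriately). Density then upgrades $\vphi(y) = \vphi(x)$ from model functions to all continuous functions on $\berkP_{A^{\eps}}$, and since the $\psi_P$ separate points of $\berkP_{A^{\eps}}$ (a multiplicative seminorm is determined by its values on polynomials), the space is Hausdorff and continuous functions separate points. Therefore $y = x$, proving the equivalence. The main subtlety to keep track of is the $\eps_n$-rescaling on each fiber, but this is absorbed into the convention for evaluating model functions at the $x_n$; once that bookkeeping is settled the argument is a clean soft-analysis compactness/density argument.
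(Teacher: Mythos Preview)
Your proof is correct and follows essentially the same approach as the paper's: both use compactness and Hausdorffness of $\berkP_{A^{\eps}}$ to produce the $\omega$-limit $y$ in the correct fiber, and then density of model functions (from \cite[Theorem 4.8]{FG24}) to conclude $y=x$. The only cosmetic difference is that the paper invokes Urysohn's lemma to manufacture a separating continuous function and then approximates it by a model function, whereas you pass directly from agreement on model functions to agreement on all continuous functions and then use that the $\psi_P$ themselves separate points; both routes are equivalent and equally short.
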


\begin{proof}
Since $\berkP_{A^{\eps}}$ is compact and Hausdorff (where Hausdorff follows from the proof of \cite[Theorem 4.8]{FG24}), the $\omega$-limit of $(x_n)$ always exists and is unique. Furthermore it must lie in $\pi^{-1}(\omega) = \bb{P}^1(\cal{H}(\omega))$. Let this limit be $x'$. If $x' \not = x$, then by Uryhshon's lemma, we may find a continuous function $\psi$ such that $\psi(x) = 0$ and $\psi(x') = 1$. By density of model functions, we may replace $\psi$ with a model function and weaken the conditions to be $\psi(x) < \eps$ and $\psi(x') > 1-\eps$. But since $x' = \lim_{\omega} x_n$, we know that 
$$\psi(x') = \lim_{\omega} \psi(x_n).$$
Thus if we also had 
$$\psi(x) = \lim_{\omega} \psi(x_n),$$
this would be a contradiction as desired. 
\end{proof}

We now need the following fact about Type I points in $\pi^{-1}(\omega)$. 

\begin{proposition} \label{TypeOne1}
Let $x = [z_0:z_1] \in \bb{P}^1(\cal{H}(\omega))$ be a Type I point. Then there exists a sequence $(x_n) \in \bb{P}^1(K_n)$ such that $x = \lim_{\omega} x_n$. 
\end{proposition}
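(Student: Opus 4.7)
The plan is to lift the homogeneous coordinates of $x$ to a sequence of classical points $(x_n) \in \bb{P}^1(K_n)$ and then use Proposition \ref{LimitCriterion1} to verify the convergence via model functions.

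After swapping coordinates if necessary, I may assume $z_1 \neq 0$ in $\cal{H}(\omega)$. Using the surjection $A^{\eps} \twoheadrightarrow \cal{H}(\omega)$, I lift $z_0, z_1$ to sequences $(a_n), (b_n) \in A^{\eps}$. Because $|z_1|_{\omega} = \lim_{\omega} |b_n|_n^{\eps_n} > 0$, the set $\{n : b_n \neq 0\}$ is $\omega$-large, so $x_n := [a_n : b_n] \in \bb{P}^1(K_n)$ is well-defined on this set; the values of $x_n$ at indices outside this set may be filled in arbitrarily, since $\omega$-limits are insensitive to $\omega$-null sets.

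To apply Proposition \ref{LimitCriterion1}, I need to verify $\vphi_\sigma(x) = \lim_\omega \vphi_\sigma(x_n)$ for every model function $\vphi_\sigma = \log \max_i \psi_{P_i}$, where the $P_i \in A^{\eps}[z_0,z_1]$ are homogeneous of common degree $l$ with no common zero. Since $\log$ and $\max$ are continuous and $\max_i \psi_{P_i}$ is bounded away from $0$ at $x$ (no common zero of the $P_i$ passes through $x$), it suffices to check $\lim_\omega (\log \psi_P)(x_n) = (\log \psi_P)(x)$ for each such $P$ individually. Writing $P_n \in K_n[z_0,z_1]$ for the $n$-th component of $P$, the scaling observation gives
$$(\log \psi_P)(x_n) = \eps_n \bigl[\log|P_n(a_n,b_n)|_n - l \log \max\{|a_n|_n, |b_n|_n\}\bigr],$$
and the defining formula $|y|_\omega = \lim_\omega |y_n|_n^{\eps_n}$ for the norm on $\cal{H}(\omega)$ implies that the $\omega$-limit of each of the two terms on the right reproduces the corresponding term of $(\log \psi_P)(x) = \log|P(z_0,z_1)|_\omega - l\log\max\{|z_0|_\omega,|z_1|_\omega\}$.

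The one point that requires care is the $\eps_n$-rescaling of model functions on each fiber $\pi^{-1}(n)$; the setup of Section \ref{sec: NonArchimedean} is engineered so that this scaling exactly matches the $\omega$-limit that defines the norm on $\cal{H}(\omega)$, which is what makes the above computation match term by term. Beyond this bookkeeping there is no serious analytic obstacle — the Type I hypothesis on $x$ is used precisely to guarantee a nonzero coordinate, hence a well-defined sequence $(x_n) \in \bb{P}^1(K_n)$, and Proposition \ref{LimitCriterion1} then yields $x = \lim_{\omega} x_n$.
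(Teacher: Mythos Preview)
Your proof is correct and follows essentially the same route as the paper: lift the homogeneous coordinates to $A^{\eps}$, form the sequence $(x_n)$, and verify the criterion of Proposition \ref{LimitCriterion1} by evaluating $\psi_P$ termwise. The only cosmetic difference is that the paper normalizes to $z_1 = 1$ and lifts just $z_0$, whereas you lift both coordinates and observe that $b_n \neq 0$ on an $\omega$-large set; the verification via model functions is otherwise identical.
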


\begin{proof}
We may without loss of generality assume that $z_1 \not = 0$. Then we may take $z_1 = 1$ and lift $z_0$ to $(z_n) \in A^{\eps}$. We then set $x_n = [z_n:1] \in \bb{P}^1(K_n)$. By Proposition \ref{LimitCriterion1}, it suffices to check that 
$$\vphi(x) = \lim_{\omega} \vphi(x_n)$$
for all model functions $\vphi$. But given 
$$\psi_P = \frac{P(z_0,z_1)}{\max\{|z_0|,|z_1|\}^l},$$
it is clear that 
$$\psi_P(x) = \lim_{\omega}  \psi_P(x_n)$$
by direct evaluation and so the same holds for any model function $\vphi_{\sigma}$ as desired. 

\end{proof}

\section{Arakelov--Green Functions on $\berkP_{A^{\eps}}$} Let $(K_n, | \cdot |_n)$ be a sequence of complete algebraically closed fields where either $K_n = \bb{C}$ for all $n$ or $K_n$ is non-archimedean for all $n$. Then if $(f_n)$ is a sequence of rational maps over $(K_n)$ with homogeneous lift $(F_n) = ((P_n,Q_n))$, we can choose $\eps_n$ such that they extend to a rational map $f$ over $A^{\eps}$ with homogeneous lift $F = (P,Q)$ where $P = (P_n)$ and $Q = (Q_n)$. If we are in the non-archimedean case, we further assume that $(f_n)$ has bad reduction for all $n$. Then we may choose our lifts such that $||F_n||_n = 1$ and choose $\eps_n = (-\log |\Res(F_n)|_n)^{-1}$ for the non-archimedean case and $(-\log |\Res(F_n)|_n/eC_d)^{-1}$ for some constant $C_d$ in the archimedean case. Either way $\eps_n \log |\Res(F_n)|$ is uniformly bounded from above and below.  
\par 
With our lift $F$, we can define the Arakelov--Green function for each $\berkP_{\cal{H}(\omega)}$ by 
$$g_{f_{\omega}}(x,y) = \eps_{\omega} \left(-\log |\tilde{x} \wedge \tilde{y}|_{\omega} + \Ht_{F_{\omega}}(\tilde{x}) + \Ht_{F_{\omega}}(\tilde{y}) - \frac{1}{d(d-1)} \log |\Res(F)|_{\omega} \right)$$
for $(x,y) \in \berkP_{\cal{H}(\omega)} \times \berkP_{\cal{H}(\omega)}$,
where $\eps_{\omega} = \eps_n$ if $\omega = n \in \bb{N}$, and is $1$ otherwise. The constant $\eps_{\omega}$ is chosen so that we have the equality 
$$\lim_{\omega} \eps_n \log |x_n|_n = \eps_{\omega} \log |x|_{\omega}$$
for any $x = (x_1,\ldots,x_n,\ldots) \in A^{\eps}$. Again, recall that the induced norm on $\berkP_{K_n}$ from $\berkP_{A^{\eps}}$ is $| \cdot |_n^{\eps_n}$ and not $| \cdot |_n$. 
\par 
For convenience, let $X = \berkP_{A^{\eps}}$ and let $X^{(2)}$ be the fiber product of $\berkP_{A^{\eps}}$ with itself over $\cal{M}(A^{\eps})$. Then we can glue the various $g_{f_{\omega}}(x,y)$'s together to obtain a function
$$g_f(x,y): X^{(2)} \to \bb{R} \cup \{\infty\}$$
that is lower semicontinuous when restricted to each $\pi^{-1}(\omega)$. The main result of this section is to prove that $g_f(x,y)$ is lower semicontinuous not just fiber-wise, but on the entire $X^{(2)}$. 
\par 
Recall that Favre--Gong \cite[Lemma 4.14]{FG24} provide a simple criteria to check if a function $\vphi: X \to \bb{R}$ is continuous. 

\begin{proposition} \label{ContinuityCriterion1}
A function $\vphi: X \to \bb{R}$ is continuous if and only if for all $\omega \in \beta \bb{N}$, the restriction $\vphi \vert_{\pi^{-1}(\omega)}$ is continuous and for any sequence of Type I points $(x_n) \in \pi^{-1}(n)$, we have $\lim_{\omega} \vphi(x_n) = \vphi(\lim_{\omega} x_n).$
\end{proposition}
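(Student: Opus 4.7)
The forward direction is immediate: the restriction of a continuous function to a subspace is continuous, and continuous functions on the compact Hausdorff space $X = \berkP_{A^{\eps}}$ preserve ultrafilter limits, so $\vphi(\lim_\omega x_n) = \lim_\omega \vphi(x_n)$ for any sequence $(x_n)$.

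For the reverse direction, the key structural input is that the Type I points lying in the principal fibers $\pi^{-1}(n)$, for $n \in \bb{N}$, are dense in $X$. This follows from three facts: $\bb{N}$ is dense in $\beta\bb{N} \simeq \cal{M}(A^{\eps})$; the projection $\pi$ is continuous; and Type I points are dense in each Berkovich fiber $\pi^{-1}(\omega) \simeq \berkP_{\cal{H}(\omega)}$. I plan to argue by contradiction: if $\vphi$ is discontinuous at some $y \in \pi^{-1}(\omega_0)$, then there is a net $(y_\alpha)$ in $X$ converging to $y$ with $|\vphi(y_\alpha) - \vphi(y)| \geq \delta$ for some $\delta > 0$. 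Using the above density together with fiber-wise continuity of $\vphi$ to preserve the $\vphi$-value to within $\delta/2$, each $y_\alpha$ can be replaced by a Type I point $z_\alpha \in \pi^{-1}(n_\alpha)$ for some $n_\alpha \in \bb{N}$ with $|\vphi(z_\alpha) - \vphi(y)| \geq \delta/2$, still converging to $y$ in $X$ (convergence being tested against the generating family of model functions).

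A Stone-\v{C}ech style argument then reduces the net $(z_\alpha)$ to an honest sequence: choose an ultrafilter on the net's index set refining the eventuality filter, push it down to $\bb{N}$ via $\alpha \mapsto n_\alpha$, and extract by a diagonal procedure a sequence $(x_n)_{n \in \bb{N}}$ of Type I points with $x_n \in \pi^{-1}(n)$ and an ultrafilter $\omega \in \beta\bb{N}$ such that $\lim_\omega x_n = y$ in $X$. This uses Proposition \ref{LimitCriterion1} and the continuity of model functions on $X$ to verify the limit. On the other hand $\lim_\omega \vphi(x_n)$ inherits the limit of $\vphi(z_\alpha)$ along the chosen ultrafilter, which is at distance at least $\delta/2$ from $\vphi(y)$, contradicting hypothesis (b).

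The main obstacle will be the last reduction, which bridges between nets (the natural tool for continuity in the non-metrizable compact Hausdorff space $X$) and sequences indexed by $\bb{N}$ with one Type I point per principal fiber. One must carefully combine the in-fiber perturbation step, governed by fiber-wise continuity, with the universal property of $\beta\bb{N}$, while preserving simultaneously the uniform lower bound $|\vphi(x_n) - \vphi(y)| \geq \delta/2$ and the convergence of $(x_n)$ to $y$ tested against every model function. The density of model functions in $C(X)$, established in the proof of \cite[Theorem 4.8]{FG24}, is what makes both requirements tractable at once.
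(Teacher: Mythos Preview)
The paper does not give its own proof here; it cites \cite[Lemma 4.14]{FG24} and remarks that the argument carries over to the non-archimedean setting. So there is no in-paper proof to compare against line by line, though the proof of the next result, Proposition~\ref{ContinuityCriterion2}, indicates the intended style of argument.

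Your replacement step has a gap. You want to trade each net point $y_\alpha$, which may lie in a non-principal fiber $\pi^{-1}(\omega_\alpha)$, for a Type~I point $z_\alpha$ in a principal fiber $\pi^{-1}(n_\alpha)$ while keeping $|\vphi(z_\alpha)-\vphi(y_\alpha)|<\delta/2$. But fiber-wise continuity only lets you perturb \emph{within} $\pi^{-1}(\omega_\alpha)$: it moves you to a Type~I point of that same, possibly non-principal, fiber. To then pass to a principal fiber while still controlling $\vphi$ you must already invoke Proposition~\ref{TypeOne1} together with the sequence hypothesis itself; density alone says nothing about $\vphi$-values across fibers. So the hypothesis is consumed once before your Stone--\v{C}ech extraction even begins, and the diagonal argument you flag as the main obstacle must be layered on top of that. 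Separately, your three-bullet justification of density is incomplete: continuity of $\pi$ together with density of $\bb{N}$ in $\beta\bb{N}$ does not by itself force $\pi^{-1}(\bb{N})$ to be dense in $X$; one needs $\pi$ to be open, or again Proposition~\ref{TypeOne1}.

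The argument the paper gives for Proposition~\ref{ContinuityCriterion2} suggests a more direct route that bypasses nets. Fix $y\in\pi^{-1}(\omega)$, use fiber-wise continuity to find an open $U_\omega\subset\pi^{-1}(\omega)$ on which $\vphi$ is close to $\vphi(y)$, extend to an open $U\subset X$, and examine the set $E$ of $n\in\bb{N}$ for which $\vphi$ stays close to $\vphi(y)$ on all of $U\cap\pi^{-1}(n)$. If $E\notin\omega$, choosing one offending Type~I point $x_n\in U\cap\pi^{-1}(n)$ for each $n\notin E$ produces, in a single stroke, exactly the kind of sequence to which the hypothesis applies, and the contradiction with the choice of $U_\omega$ follows. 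This builds the sequence directly from the fibered structure of $X$ over $\beta\bb{N}$ and avoids the net-to-sequence reduction entirely.
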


\begin{proof}
This is \cite[Lemma 4.14]{FG24}. Although they only prove it in the archimedean case, the same proof applies in the non-archimedean setting.    
\end{proof}

We prove a similar criteria to check lower semicontinuity. 

\begin{proposition} \label{ContinuityCriterion2}
Let $\vphi(x,y): X^{(2)} \to \bb{R} \cup \{ \infty\}$ be a function such that on each fiber, it is lower semi-continuous and that 
\begin{equation} \label{eq: SemiContinuity1}
\vphi(x,y) = \liminf_{\substack{ (x',y') \to (x,y) \\ (x',y') \text{ Type I}}} \vphi(x',y').
\end{equation}
If
\begin{equation} \label{eq: SemiContinuity2}
\lim_{\omega} \vphi(x_n,y_n) = \vphi(\lim_{\omega} x_n, \lim_{\omega} y_n)
\end{equation}
for all sequence of type I points $(x_n), (y_n)$, then $\vphi$ itself is lower semi-continuous. 
\end{proposition}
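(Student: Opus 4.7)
The plan is to establish lower semi-continuity at an arbitrary $(x_0, y_0) \in X^{(2)}$ by mimicking the structure of Proposition \ref{ContinuityCriterion1}: use \eqref{eq: SemiContinuity1} to reduce to Type I points, invoke Proposition \ref{TypeOne1} to push them further into principal fibres, and then pull a contradiction out of \eqref{eq: SemiContinuity2}.

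Write $\omega_0 = \pi(x_0)$ and fix $c < \vphi(x_0, y_0)$. If $\omega_0 \in \bb{N}$ is principal then $\pi^{-1}(\omega_0)$ is clopen in $X^{(2)}$ (since $\{\omega_0\}$ is clopen in $\beta \bb{N}$), so every net converging to $(x_0, y_0)$ eventually enters the fibre and fibrewise lower semi-continuity handles that case. I therefore assume $\omega_0$ is non-principal and argue by contradiction: take a net $(z_\alpha, w_\alpha) \to (x_0, y_0)$ with $\vphi(z_\alpha, w_\alpha) \leq c$. Applying \eqref{eq: SemiContinuity1} at each $(z_\alpha, w_\alpha)$ lets me replace it by a Type I point $(z'_\alpha, w'_\alpha)$ in the same fibre, taken in a freely prescribed neighbourhood of $(z_\alpha, w_\alpha)$, with $\vphi(z'_\alpha, w'_\alpha) \leq c + \delta$; after refining the choice of approximant, the resulting net of Type I points still converges to $(x_0, y_0)$ in $X^{(2)}$ with $\limsup \vphi \leq c$.

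Next, each $(z'_\alpha, w'_\alpha)$ lies in some $\pi^{-1}(\omega_\alpha)$. Applying Proposition \ref{TypeOne1} to each coordinate gives $\omega_\alpha$-approximating sequences of Type I points in the $\bb{P}^1(K_n)$'s, and \eqref{eq: SemiContinuity2} says the $\vphi$-values along these sequences $\omega_\alpha$-converge to $\vphi(z'_\alpha, w'_\alpha)$. Consequently I can pick $n_\alpha \in \bb{N}$ and a Type I point $(\tilde z_\alpha, \tilde w_\alpha) \in \bb{P}^1(K_{n_\alpha})^2$ inside any preassigned neighbourhood of $(z'_\alpha, w'_\alpha)$ with $\vphi(\tilde z_\alpha, \tilde w_\alpha) \leq c + 2\delta$. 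This yields a net of Type I points in principal fibres converging to $(x_0, y_0)$ in $X^{(2)}$ with $\limsup \vphi \leq c$.

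The main obstacle is then to promote this net into a genuine sequence $(x_n, y_n) \in \bb{P}^1(K_n)^2$ to which \eqref{eq: SemiContinuity2} applies verbatim. I would take $\omega$ to be a cluster ultrafilter of $(n_\alpha) \subset \bb{N}$ in $\beta\bb{N}$ (existence by compactness); $\omega$ must be non-principal because otherwise a cofinal subnet would lie in the closed fibre $\pi^{-1}(m_0)$, forcing $(x_0, y_0)$ into that fibre and contradicting non-principality of $\omega_0$. To define the sequence, for each $n$ in the image of the net I would set $(x_n, y_n) = (\tilde z_{\alpha(n)}, \tilde w_{\alpha(n)})$ with $\alpha(n)$ chosen so that $\{n : (x_n, y_n) \in V\} \in \omega$ for every neighbourhood $V$ of $(x_0, y_0)$; since $X^{(2)}$ is not known to be first countable, this coordination between the ambient neighbourhood filter and the ultrafilter $\omega$ is the delicate technical point, most naturally handled by refining the cluster ultrafilter further along the filter generated by these sets before committing to $\omega$. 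Once the sequence and ultrafilter are in place, \eqref{eq: SemiContinuity2} gives $\vphi(x_0, y_0) = \lim_\omega \vphi(x_n, y_n) \leq c + 2\delta$, and shrinking $\delta$ contradicts $c < \vphi(x_0, y_0)$, which closes the argument.
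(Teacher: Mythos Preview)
Your reduction to Type I points in principal fibres is fine, and you correctly diagnose that the final step---turning the net into a sequence $(x_n,y_n)$ with $\lim_{\omega_0}(x_n,y_n)=(x_0,y_0)$---is the crux. Unfortunately this step is a genuine gap, not just a technicality. The hypothesis \eqref{eq: SemiContinuity2} forces the ultrafilter to be the specific $\omega_0$ with $(x_0,y_0)\in\pi^{-1}(\omega_0)$, so you cannot ``refine'' to a different one; and for that fixed $\omega_0$ you must, for each $n$, commit to a single $(x_n,y_n)$ that simultaneously lies in every prescribed neighbourhood $V$ of $(x_0,y_0)$ on an $\omega_0$-large set. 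Since $X^{(2)}$ has no countable neighbourhood base at $(x_0,y_0)$, there is no diagonal argument available, and the selection you describe does not obviously exist.

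The paper avoids this entirely by exploiting the fibrewise lower semi-continuity hypothesis at the \emph{non-principal} fibre---you only invoked it at principal ones. From $\vphi(x_0,y_0)>\delta+\eps$ one first extracts opens $U_\omega,V_\omega\subset\pi^{-1}(\omega_0)$ on which $\vphi>\delta+\tfrac{\eps}{2}$, then extends them to opens $U,V$ of $X$. Now no sequence needs to hit $(x_0,y_0)$ exactly; it suffices that its $\omega_0$-limit land in $U_\omega\times V_\omega$. This makes the construction trivial: if the set of $n$ for which $U_n\times V_n$ contains a point with $\vphi\le\delta+\tfrac{\eps}{2}$ were $\omega_0$-large, pick \emph{any} such witness for each $n$, and \eqref{eq: SemiContinuity2} yields a point of $U_\omega\times V_\omega$ with $\vphi\le\delta+\tfrac{\eps}{2}$, a contradiction. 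The complementary $\omega_0$-large set $E$ then gives a genuine neighbourhood $(U\times V)\cap\pi^{-1}(E)$ of $(x_0,y_0)$ on which every principal-fibre point has $\vphi>\delta+\tfrac{\eps}{2}$, contradicting your supply of bad Type I points there. The moral: fatten the target from a point to an open set before building the sequence.
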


\begin{proof}
We have to show that $\vphi^{-1}((\delta, \infty))$ is open. Assume otherwise. Then there exists $(x_0,y_0)$ such that $\vphi(x_0,y_0) >\delta + \eps$, with $\eps > 0$, such that for any open neighbourhood $U \times V$, there exists $x_U,y_V$ such that $\vphi(x_U,y_V) < \delta$. We first show that we may assume $x_U,y_V \in \pi^{-1}(\bb{N})$. We do so by first assuming that $x_U$ and $y_V$ are both Type I points by \eqref{eq: SemiContinuity1}. Then we may further assume that $(x_U,y_V) \in \pi^{-1}(\bb{N})$ as any Type I point in $\pi^{-1}(\omega)$ is the $\omega$-limit of a sequence of Type I points in $\pi^{-1}(\bb{N})$ by Proposition \ref{TypeOne1} and we can then apply \eqref{eq: SemiContinuity2}.
\par 
Now since $\vphi$ is lower semicontinuous on each fiber, we may assume that $x_0,y_0 \in \pi^{-1}(\omega)$ for some non-principal ultrafilter $\omega$. Then there exists opens $U_{\omega},V_{\omega}$ of $\berkP_{\cal{H}(\omega)}$ such that $\vphi(x,y) > \delta+ \frac{\eps}{2}$ for all $(x,y) \in U_{\omega}\times V_{\omega}$. Extend $U_{\omega}$ and $V_{\omega}$ to open sets $U,V$ of $\berkP_{A^{\eps}}$, which we may do so as the topology on $\berkP_{\cal{H}(\omega)} = \pi^{-1}(\omega)$ is the subspace topology from $\berkP_{A^{\eps}}$. Let $U_n,V_n$ be the intersection of $U,V$ with $\pi^{-1}(n)$. Consider 
$$E = \{ n \in \bb{N} \mid \vphi(x,y) > \delta + \frac{\eps}{2} \text{ for all } (x,y) \in \pi^{-1}(n).\}$$
We claim that $E$ is $\omega$-large. Otherwise, $\bb{N} \setminus E$ is $\omega$-large and we may find sequences $(x_n),(y_n)$ such that for $n \in \bb{N} \setminus E$, we have 
$$\vphi(x_n,y_n) \leq \delta + \frac{\eps}{2} \implies \lim_{\omega} \vphi(x_n,y_n) = \lim \vphi( \lim_{\omega} x_n, \lim_{\omega} y_n)\leq \delta + \frac{\eps}{2}.$$
But $\lim_{\omega} x_n \in U_{\omega}$ and $\lim_{\omega} y_n \in V_{\omega}$ and so this is a contradiction. Hence $E$ is $\omega$-large and now if we let $E'$ be the open subset of $\beta \bb{N}$ that corresponds to $\{\omega \in \beta \bb{N} \mid E \in \omega\}$, we may take $U' = U \cap \pi^{-1}(E)$ and $V' = V \cap \pi^{-1}(E)$ to obtain open subsets for which on $\pi^{-1}(\bb{N})$, we always have $\vphi(x,y) > \delta + \frac{\eps}{2}$. This however contradicts the existence of $x_{U'}, y_{V'}$ and so we are done. 
\end{proof}

We now prove that $g_f(x,y)$ is lower semicontinuous. 

\begin{proposition} \label{ArakelovGreenContinuity1}
The function $g_f(x,y): X^{(2)} \to \bb{R} \cup \{\infty\}$ is lower semicontinuous.
\end{proposition}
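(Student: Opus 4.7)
The strategy is to apply Proposition \ref{ContinuityCriterion2}, which reduces the problem to checking three properties: fiberwise lower semi-continuity of $g_f$, the Type-I liminf formula \eqref{eq: SemiContinuity1}, and the $\omega$-limit compatibility \eqref{eq: SemiContinuity2} for sequences of classical points. The fiberwise lower semi-continuity on each fiber $\berkP_{\cal{H}(\omega)} \times \berkP_{\cal{H}(\omega)}$ is immediate from the Baker--Rumely result recalled in the previous section.

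For the other two conditions, the plan is to decompose
$$g_f(x,y) = \eps_\omega \bigl(-\log|\tilde{x} \wedge \tilde{y}|_\omega\bigr) + \eps_\omega \bigl(\Ht_{F_\omega}(\tilde{x}) + \Ht_{F_\omega}(\tilde{y})\bigr) - \frac{\eps_\omega}{d(d-1)} \log|\Res(F)|_\omega$$
and treat each piece separately. The resultant contribution is harmless: since $\Res(F) \in (A^{\eps})^{\times}$ by the choice of $\eps_n$, the identity $\lim_\omega \eps_n \log|z_n| = \eps_\omega \log|z|_\omega$ for $z = (z_n) \in A^{\eps}$ shows that this term is a bounded function of $\omega$ with the correct $\omega$-limit behavior. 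The singular term is handled by applying the very same identity to the element $\tilde{x} \wedge \tilde{y} \in A^{\eps}$ along Type I sequences, noting that the right-hand side is $+\infty$ precisely when the $\omega$-limits of the $x$- and $y$-sequences coincide. The Type-I liminf property \eqref{eq: SemiContinuity1} for this kernel, restricted to each fiber, is standard Berkovich potential theory since $\log|P(z_0,z_1)|$ extends continuously to the product $\berkP_{\cal{H}(\omega)} \times \berkP_{\cal{H}(\omega)}$ for any homogeneous polynomial $P$ not vanishing at the lifts $\tilde{x}, \tilde{y}$.

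The main obstacle, which I expect will take the most care, is the canonical height piece. My plan is to prove that $\Ht_F$ is continuous on $\bb{A}^{2,\an}_{A^{\eps}} \setminus \{0\}$, after which both \eqref{eq: SemiContinuity1} and \eqref{eq: SemiContinuity2} for the $\Ht_F$ contribution follow from Proposition \ref{ContinuityCriterion1}, applied after lifting classical points to $A^{\eps}$-valued coordinates. For the continuity of $\Ht_F$, I would use the telescoping expansion
$$\Ht_F(z) = \log \|z\| + \sum_{k \geq 0} \frac{1}{d^{k+1}} \bigl(\log\|F^{k+1}(z)\| - d \log\|F^k(z)\|\bigr),$$
where each summand is a model-type continuous function on $\bb{A}^{2,\an}_{A^{\eps}} \setminus \{0\}$. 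The crucial input is Proposition \ref{BoundRadius1}: by the choice of $\eps_n$ and the normalization $|F_n|=1$, there is a uniform-in-fiber lower bound $\eps_n \log \|F_n(z)\| \geq d\, \eps_n \log \|z\| - C$ valid for $\|z\|$ sufficiently large, with $C$ independent of $n$. This forces each summand of the telescoping series to be $O(d^{-k})$ uniformly on $X$, giving uniform convergence and hence continuity of $\Ht_F$ on the entire family. Combining the three pieces and invoking Proposition \ref{ContinuityCriterion2} yields the lower semi-continuity of $g_f$ on $X^{(2)}$.
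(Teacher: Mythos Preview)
Your proposal is correct and follows essentially the same route as the paper: both apply Proposition \ref{ContinuityCriterion2}, split $g_f$ into the resultant term, the wedge term, and the canonical height term, and handle the height term via the telescoping expansion together with a uniform bound on each summand. The only cosmetic difference is that the paper cites \cite[Lemma 5]{Ing22} for the uniform estimate $\bigl|\tfrac{1}{d}\log\|F(z)\|-\log\|z\|\bigr|\le C$, whereas you extract the same bound from (the proof of) Proposition \ref{BoundRadius1}; note that this inequality in fact holds for all $z$, not only for $\|z\|$ large, which is what makes the telescoping bound uniform.
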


\begin{proof}
We apply Proposition \ref{ContinuityCriterion2}. Condition \eqref{eq: SemiContinuity1} easily follows from the fact that $g_{f_{\omega}}(x,y)$ is continuous when $x$ is a fixed non-Type I point. To check condition \eqref{eq: SemiContinuity2}, we use the definition
$$g_{f_{\omega}}(x,y) = \eps_{\omega} \left(-\log |\tilde{x} \wedge \tilde{y}|_{\omega} + \Ht_{F_{\omega}}(\tilde{x}) + \Ht_{F_{\omega}}(\tilde{y}) - \frac{1}{d(d-1)} \log |\Res(F)|_{\omega} \right).$$
Clearly we have $\lim_{\omega} \eps_n \log |\Res(F)|_n = \eps_{\omega} \log |\Res(F)|_{\omega}$. Given sequences of Type I points $(x_n),(y_n)$ with $x_n,y_n \in \pi^{-1}(n)$, we may pick lifts $\tilde{x}_n, \tilde{y}_n$ such that $|\tilde{x}_n| = |\tilde{y}_n| = 1$. Then $\tilde{x} = (\tilde{x}_n), \tilde{y} = (\tilde{y}_n)$ gives us lifts for $(x_n),(y_n)$ and it is immediate to check that 
$$\lim_{\omega} \eps_n \log |\tilde{x}_n \wedge \tilde{y}_n|_n = \eps_{\omega} \log |\tilde{x} \wedge \tilde{y}|_{\omega}.$$
It suffices to check that 
$$\lim_{\omega} \eps_n \Ht_{F_n}(\tilde{x}_n) = \Ht_{F_{\omega}}(\tilde{x}).$$
We first claim that since As $\eps_{\omega} \log |\Res(F)|_{\omega}^{-1}$ is uniformly bounded from above and $||F||_{n} = 1$, there exists a uniform constant $C$ such that for all $\omega \in \beta \bb{N}$,  
$$\left|\frac{\eps_{\omega}}{d^k} \log |F^{(k)}(\tilde{x})|_{\omega} - \eps_{\omega} \Ht_{F_{\omega}}(\tilde{x}) \right| \leq \frac{C}{d^k}.$$
Indeed using the telescoping sum 
$$\eps_{\omega} \left(\Ht_{F_{\omega}} - \frac{1}{d^k} \log |F^{(k)}(\tilde{x})|_{\omega} \right)= \eps_{\omega} \sum_{n \geq k} \frac{1}{d^n} \left(\frac{1}{d} \log |F^{({n+1})}(\tilde{x})|_{\omega} - \log |F^n(\tilde{x})|_{\omega} \right),$$
by \cite[Lemma 5]{Ing22}, we may bound 
$$\left(\frac{1}{d} \log |F^{(n+1)}(\tilde{x})|_{\omega} - \log |F^n(\tilde{x})|_{\omega} \right)$$
in terms of $\eps_{\omega} |\Res(f)|_{\omega}^{-1}$ and $\eps_{\omega} ||F||_{\omega}$. As $\eps_{\omega} |\Res(f)|^{-1}_{\omega}$ is uniformly bounded and $||F||_{\omega} = 1$, both of this quantities are uniformly bounded, we obtain our uniform constant $C$. 
\par 
Now by continuity of the function 
$$\log |F^{k}(x_0,x_1)| - d^k \log \max\{|x_0|,|x_1|\},$$
we have 
$$\lim_{\omega} \eps_n \log |F^{(k)}(\tilde{x}_n)|_n = \eps_{\omega} \log |F^{(k)}(\tilde{x})|_{\omega}$$
and taking $k \to \infty$ implies that 
$$\lim_{\omega} \eps_n \Ht_{F_n}(\tilde{x}_n) = \eps_{\omega} \Ht_{F_{\omega}}(\tilde{x})$$
as desired.
\end{proof}

\section{Degeneration of Arakelov--Green's Function} We now prove Theorem \ref{IntroUniformBaker3} using Proposition \ref{ArakelovGreenContinuity1}. We first prove a general statement about open covers. 

\begin{proposition} \label{OpenCover1}
Let $\omega \in \beta \bb{N}$ and let $U$ be an open set such that it contains $\pi^{-1}(\omega)$. Then if 
$$E = \{ n \in \bb{N} \mid \pi^{-1}(n) \subseteq U\},$$
the set $E$ is $\omega$-large. 
\end{proposition}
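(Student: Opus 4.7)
The plan is to argue by contradiction using the compactness of $X = \berkP_{A^{\eps}}$, which was established earlier via its covering by two copies of $\cal{M}(A^{\eps}[T])$. Suppose $E$ is not $\omega$-large. Then its complement $E^c = \bb{N} \setminus E$ is $\omega$-large, and for each $n \in E^c$ there exists a point $x_n \in \pi^{-1}(n)$ with $x_n \notin U$. For $n \in E$, pick any $x_n \in \pi^{-1}(n)$.

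Since $X$ is compact Hausdorff, the sequence $(x_n)$ has a unique $\omega$-limit $x_{\infty} \in X$. The first key step is to show that $x_{\infty} \notin U$. This follows from the fact that $\omega$-limits of points in a closed set remain in the closed set: if $x_{\infty}$ were in the open set $U$, then $\{n \mid x_n \in U\}$ would have to be $\omega$-large, but this set is disjoint from the $\omega$-large set $E^c$ on which $x_n \in X \setminus U$, giving $\emptyset \in \omega$.

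The second key step is to identify $\pi(x_{\infty})$. The projection $\pi: X \to \cal{M}(A^{\eps}) = \beta \bb{N}$ is continuous, so it commutes with $\omega$-limits: $\pi(x_{\infty}) = \lim_{\omega} \pi(x_n) = \lim_{\omega} n = \omega$, where the last equality is essentially the definition of the topology on $\beta \bb{N}$ (the $\omega$-limit of the principal ultrafilters $n$ is $\omega$ itself, since any basic open neighborhood $U_F = \{\omega' \mid F \in \omega'\}$ of $\omega$ corresponds to $F \in \omega$, and $n \in U_F$ iff $n \in F$, so $\{n \mid n \in U_F\} = F$ is $\omega$-large).

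Therefore $x_{\infty} \in \pi^{-1}(\omega) \subseteq U$, contradicting $x_{\infty} \notin U$. Hence $E$ must be $\omega$-large. The only genuine content is the continuity of $\pi$ and the identification of the $\omega$-limit in $\beta \bb{N}$; both are immediate from the setup in Section 4, so I expect no serious obstacle here.
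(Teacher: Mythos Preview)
Your proof is correct and follows essentially the same approach as the paper's own proof: assume $E$ is not $\omega$-large, pick $x_n \in \pi^{-1}(n) \setminus U$ for $n \in E^c$, take the $\omega$-limit using compactness and Hausdorffness of $\berkP_{A^{\eps}}$, observe that the limit lies in $\pi^{-1}(\omega)$ by continuity of $\pi$, and derive a contradiction since the limit cannot lie in $U$. Your write-up actually spells out the identification $\lim_{\omega} n = \omega$ in $\beta\bb{N}$ a bit more carefully than the paper does.
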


\begin{proof}
Assume otherwise that $E$ is not $\omega$-large. Then $\bb{N} \setminus E$ is $\omega$-large and we may find a sequence $(x_n)$ such that $x_n \in \pi^{-1}(n)$ but $x_n \not \in U$ for $n \in \bb{N} \setminus E$. Then using the general definition of $\omega$-limit, as $\berkP_{A^{\eps}}$ is a compact and Hausdroff space, the $\omega$-limit $\lim_{\omega} x_n$ exists. Let this limit be $x$. Then since $\pi: \berkP_{A^{\eps}} \to \cal{M}(A^{\eps}) \simeq \beta \bb{N}$ is continuous, we must have $\lim_{\omega} x_n \in \pi^{-1}(\omega)$. If $x \in U$, then by definition of being a $\omega$-limit, we must have 
$$E = \{n \mid x_n \in U\}$$
be $\omega$-large but we assumed otherwise. Hence $x \not \in U$ but $U$ contains $\pi^{-1}(\omega)$ which is a contradiction. 
\end{proof}

We first handle the archimedean case. 

\begin{theorem} \label{UniformBakerLower1}
There exists a constant $\delta > 0$ depending on $d$ and a positive integer $N$ such that for any degree $d \geq 2$ rational map $f$ over $(\bb{C}, | \cdot |)$ with $| \cdot |$ the standard norm, we may cover $\bb{P}^1(\bb{C})$ into $N$ open sets $U_1,\ldots,U_N$ such that 
$$g_{f}(x,y) > \delta \max\{-\log |\res(f)|,1\}$$ 
for any $x,y \in U_i$.  
\end{theorem}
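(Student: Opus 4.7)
The plan is to argue by contradiction using the archimedean Favre--Gong degeneration of Section \ref{sec: Archimedean}. Assume the theorem fails; then for each $n \geq 1$ one can find a degree $d$ rational map $f_n$ over $\mathbb{C}$ for which $\mathbb{P}^1(\mathbb{C})$ admits no cover by $n$ open sets satisfying the desired lower bound with constant $1/n$. By the conjugation invariance \eqref{eqref: ArakelovGreenMobius1}, we may replace $f_n$ by a conjugate with $|\Res(f_n)| \geq |\res(f_n)|/2$, then choose homogeneous lifts $F_n$ with $\|F_n\| = 1$, so that $|\Res(F_n)| = |\Res(f_n)| \leq C_d$ for a constant $C_d = C_d(d)$. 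Set $\varepsilon_n = (-\log(|\Res(F_n)|/(eC_d)))^{-1} \in (0,1]$; then $1/\varepsilon_n$ is comparable to $\max\{-\log|\res(f_n)|,1\}$ up to constants depending only on $d$. Fix a non-principal ultrafilter $\omega \in \beta\mathbb{N}$ and pass to an $\omega$-large subset on which $\lim_\omega |\Res(F_n)|$ exists. The sequence $F = (F_n)$ then assembles, as in Section \ref{sec: Archimedean}, into a degree $d$ rational map $f$ on $\berkP_{A^\varepsilon}$ whose fiber $f_\omega$ over $\omega$ is a degree $d$ rational map on $\berkP_{\mathcal{H}(\omega)}$.

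In either regime for $\mathcal{H}(\omega)$ we extract a finite open cover of $\berkP_{\mathcal{H}(\omega)}$ with a uniform positive lower bound on $g_{f_\omega}$. If $\lim_\omega |\Res(F_n)| > 0$, then $\mathcal{H}(\omega) \cong (\mathbb{C}, |\cdot|^e)$ for some $e > 0$ and $f_\omega$ is a genuine archimedean rational map, so Baker's original estimate \eqref{eq: IntroBakerLower2} directly supplies the cover. Otherwise $\lim_\omega |\Res(F_n)| = 0$, $\mathcal{H}(\omega)$ is non-archimedean, and by \cite[Theorem 3.17]{FG24} the map $f_\omega$ has bad reduction over $\mathcal{H}(\omega)$; Baker's non-archimedean result \cite{Bak09} then yields $g_{f_\omega}(x,x) > 0$ on the compact space $\berkP_{\mathcal{H}(\omega)}$, which combined with lower semicontinuity of $g_{f_\omega}$ on the fiber and a standard compactness argument produces the cover. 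In either case we obtain $\delta_\omega > 0$, a positive integer $N_\omega$, and an open cover $V_1,\ldots,V_{N_\omega}$ of $\berkP_{\mathcal{H}(\omega)}$ with $g_{f_\omega}(x,y) > 2\delta_\omega$ on each $V_i \times V_i$.

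The final, crucial step is to spread this cover from the single fiber $\pi^{-1}(\omega)$ to an $\omega$-large family of classical fibers $\pi^{-1}(n)$. By Proposition \ref{ArakelovGreenContinuity1}, the set $W = \{(x,y) \in X^{(2)} : g_f(x,y) > \delta_\omega\}$ is open. Extend each $V_i$ to an open $\tilde V_i \subset X = \berkP_{A^\varepsilon}$ with $\tilde V_i \cap \pi^{-1}(\omega) = V_i$ and arranged so that $\bigcup_i (\tilde V_i \times_X \tilde V_i) \subset W$ on a neighborhood of $\pi^{-1}(\omega) \times_{\mathcal{M}(A^\varepsilon)} \pi^{-1}(\omega)$. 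Applying Proposition \ref{OpenCover1} both to the open $U = \bigcup_i \tilde V_i \subset X$ (which contains $\pi^{-1}(\omega)$) and to the intersection of $W$ with a neighborhood of the diagonal over $\omega$, we obtain an $\omega$-large $E \subseteq \mathbb{N}$ such that for $n \in E$, the fiber $\pi^{-1}(n) \cong \mathbb{P}^1(\mathbb{C})$ is covered by the $N_\omega$ opens $U_{i,n} = \tilde V_i \cap \pi^{-1}(n)$ with $g_f(x,y) > \delta_\omega$ on each $U_{i,n} \times U_{i,n}$. Using the scaling inherent to the construction of $g_f$ on $\pi^{-1}(n)$, this translates to $g_{f_n}(x,y) > \delta_\omega' \max\{-\log|\res(f_n)|,1\}$ for some $\delta_\omega' > 0$ depending only on $d$ and $\delta_\omega$. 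Since $E$ is infinite, we can pick $n \in E$ with $n > N_\omega$ and $1/n < \delta_\omega'$, contradicting the choice of $f_n$. The main obstacle is precisely this spreading step: propagating the fiber-wise lower bound on $g_{f_\omega}$ from $\pi^{-1}(\omega)$ to an $\omega$-large neighborhood of classical fibers, which is enabled by the lower semicontinuity of $g_f$ on the whole $X^{(2)}$ (Proposition \ref{ArakelovGreenContinuity1}) in conjunction with the $\omega$-largeness criterion (Proposition \ref{OpenCover1}).
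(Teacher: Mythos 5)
Your overall strategy is the same as the paper's: argue by contradiction, normalize the resultant by conjugation, degenerate the sequence $(f_n)$ over the Banach ring $A^{\eps}$ with $\eps_n=(-\log(|\Res(F_n)|/eC_d))^{-1}$, invoke Baker's positivity results on the fiber $\pi^{-1}(\omega)$ for a non-principal $\omega$ (archimedean fiber if $\lim_\omega|\Res(F_n)|>0$, non-archimedean with bad reduction otherwise), and then transfer the resulting cover to $\omega$-many classical fibers via Proposition \ref{ArakelovGreenContinuity1} and Proposition \ref{OpenCover1}, finally undoing the $\eps_n$-scaling. The minor variations (conjugating so that $|\Res(f_n)|\geq |\res(f_n)|/2$ rather than equality, which needs the small modification of \cite[Theorem 3.17]{FG24} carried out in Proposition \ref{BadReductionDegeneration1}; citing Baker's archimedean cover directly on an archimedean fiber) are harmless.

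The one step that does not go through as written is the spreading step, which you yourself flag as the crux. You first build a cover $V_1,\ldots,V_{N_\omega}$ of the fiber $\berkP_{\cal{H}(\omega)}$ with $g_{f_\omega}>2\delta_\omega$ on each $V_i\times V_i$, and then assert that each $V_i$ can be ``extended to an open $\tilde V_i\subset X$ \ldots arranged so that $\bigcup_i(\tilde V_i\times_X\tilde V_i)\subset W$.'' This arrangement is exactly what needs proof: $V_i$ is merely open in the compact fiber, so $V_i\times V_i$ need not be compact, and openness of $W=\{g_f>\delta_\omega\}$ in $X^{(2)}$ alone does not produce an open $\tilde V_i$ in $X$ whose square (intersected with $X^{(2)}$) stays inside $W$. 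Likewise, your second appeal to Proposition \ref{OpenCover1} ``applied to the intersection of $W$ with a neighborhood of the diagonal'' does not typecheck, since that proposition concerns open subsets of $X$ containing a fiber, not subsets of $X^{(2)}$. The paper avoids this entirely by never forming a fiberwise cover first: for each $\zeta\in\pi^{-1}(\omega)$ it uses $g_f(\zeta,\zeta)>0$ together with lower semicontinuity on all of $X^{(2)}$ (Proposition \ref{ArakelovGreenContinuity1}) to produce a neighborhood $U_\zeta$ of $\zeta$ that is open in $X$ with $g_f>\delta_\zeta$ on $U_\zeta\times U_\zeta$, and then compactness of the fiber yields finitely many $U_i$ and a single $\delta$, so only one application of Proposition \ref{OpenCover1} to $\bigcup_i U_i$ is needed. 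Your variant can be repaired — shrink the fiberwise cover to a compact refinement $C_i\subset V_i$ (possible since the fiber is compact Hausdorff) and run a tube-lemma argument for $C_i\times C_i\subset W$ to get the desired $\tilde V_i$ — but as stated the extension is an unproved assertion rather than a consequence of the cited propositions.
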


\begin{proof}
Since 
$$g_{\vphi \circ f \circ \vphi^{-1}}(\vphi(x),\vphi(y)) = g_f(x,y)$$
for any $\vphi \in \GL_2(\bb{C})$, the existence of $N$ many such open sets $U_i$ is invariant under conjugation and so we may freely conjugate $f$. Assuming otherwise, then for each pair $(N,\delta)$, we may find a rational map $f$ defined over $\bb{C}$ such that there does not exists $N$ open sets $U_1,\ldots,U_N$ with 
$$g_f(x,y) \geq \delta \max\{-\log |\res(f)|,1\} \text{ for all }x,y \in U_i.$$
Taking $(N,\delta) = (n, \frac{1}{n})$, we may find a sequence of fields $(K_n, | \cdot |_n)$ with $(K_n, | \cdot |_n) = (\bb{C},| \cdot |)$, along with rational maps $f_n$ defined over $K_n$ such that for each $n$, there does not exists $n$ open sets $U_1,\ldots,U_n$ such that 
$$g_{f_n}(x,y) > \frac{1}{n} \max\{-\log |\res(f_n)|,1\} \text{ for all } x,y \in U_i.$$
We may replace $f_n$ by its conjugate so that $|\Res(f_n)| = |\res(f_n)|$. We now consider degenerating the sequence as in Section \ref{sec: Archimedean}. Taking 
$$\eps_n = \left(-\log \frac{|\Res(F_n)|}{eC_d} \right)^{-1},$$
we obtain a rational map $f$ of degree $d$ on $\berkP_{A^{\eps}}$. Then for any non-principal ultrafilter $\omega$, we know that either $\cal{H}(\omega)$ is archimedean or $\cal{H}(\omega)$ is non-archimedean and $f_{\omega}$ has bad reduction. Either way by \cite[Theorem 1.12]{Bak09}, we know that $g_f(\zeta,\zeta) > 0$ for any $\zeta \in \pi^{-1}(\omega)$. We now apply Proposition \ref{ArakelovGreenContinuity1} to conclude that for each such $\zeta$, we may find a $\delta_{\zeta} > 0$ and an open neighbourhood $U_{\zeta}$ such that $g_f(x,y) > \delta_{\zeta}$ for all $x,y \in U_{\zeta}$. By compactness of $\pi^{-1}(\omega)$, we may find $U_1,\ldots,U_N$ and a $\delta > 0$ such that $g_f(x,y) > \delta$ for all $x,y \in U_i$ and $\pi^{-1}(\omega) \in \bigcup_{i=1}^{N} U_i$. 
\par 
By Proposition \ref{OpenCover1}, the set 
$$E = \{n \mid \pi^{-1}(n) \subseteq \bigcup_{i=1}^{N} U_i\}$$
is $\omega$-large and thus contain arbitrarily large natural numbers $n$ as $\omega$ is non-principal. Moving back to the field $(K_n, | \cdot |_n)$, this implies we have $N$ open sets $U_{i,n}$ covering $\bb{P}^1(K_n)$ such that for all $x,y \in U_{i,n}$, we have 
$$g_{f_n}(x,y) > \delta (- \log |\res(f_n)|_n + C_d) \geq \delta \max\{- \log |\res(f)|,1\}$$
as $C_d$ is taken large enough such that $-\log |\res(f)| + C_d \geq 1$ for all $f \in \Rat_d(\bb{C})$. But if $n > N, \delta^{-1}$, this is a contradiction and hence we are done. 
\end{proof}

We next do the non-archimedean case, which follows in a similar fahsion.

\begin{theorem} \label{UniformBakerLower2}
There exists a constant $\delta > 0$ depending only on $d$ and a positive integer $N$ such that for complete complete algebraically closed field $(K, | \cdot |)$ where $| \cdot |$ is non-archimedean and any degree $d \geq 2$ rational map $f$ with bad reduction, we may cover $\berkP_K$ with $N$ open sets $U_1,\ldots,U_N$ such that 
$$g_f(x,y) > -\delta \log |\res(f)|$$
for all $x,y \in U_i$. 
\end{theorem}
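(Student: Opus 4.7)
The plan is to mirror the proof of Theorem \ref{UniformBakerLower1} step for step, replacing the archimedean degeneration of Section \ref{sec: Archimedean} with the non-archimedean construction of Section \ref{sec: NonArchimedean}. I would argue by contradiction: assuming the statement fails, the choice $(N,\delta) = (n,1/n)$ produces a sequence $(K_n, |\cdot|_n)$ of complete algebraically closed non-archimedean fields and degree $d$ rational maps $f_n$ with bad reduction such that $\berkP_{K_n}$ admits no cover by $n$ open sets on which $g_{f_n}(x,y) > -\frac{1}{n}\log|\res(f_n)|_n$. Setting $\eps_n := (-\log |\Res(f_n)|_n)^{-1}$, I would use the conjugation invariance \eqref{eqref: ArakelovGreenMobius1} to replace each $f_n$ with a conjugate satisfying $2^{1/\eps_n}|\Res(f_n)|_n \geq |\res(f_n)|_n$.

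Next I would pick lifts $F_n$ with $|F_n|_n = 1$ and apply the construction of Section \ref{sec: NonArchimedean} to glue the $f_n$ into a single degree $d$ rational map $f$ over the Banach ring $A^\eps$, acting on $\berkP_{A^\eps}$. For any non-principal ultrafilter $\omega$, Proposition \ref{NonArchDegen1} ensures that $\cal{H}(\omega)$ is complete, algebraically closed and non-archimedean, and Proposition \ref{BadReductionDegeneration1} guarantees that the limit $f_\omega$ still has bad reduction over $\cal{H}(\omega)$.

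Then \cite[Theorem 1.12]{Bak09} applied over $\cal{H}(\omega)$ delivers $g_{f_\omega}(\zeta,\zeta) > 0$ for every $\zeta \in \berkP_{\cal{H}(\omega)}$. Using the lower semi-continuity of $g_f$ on $X^{(2)}$ from Proposition \ref{ArakelovGreenContinuity1}, each such $\zeta$ admits a product-form neighbourhood $V_\zeta \times V_\zeta \subset X^{(2)}$ on which $g_f > \delta_\zeta$. Compactness of the fiber $\pi^{-1}(\omega) \simeq \berkP_{\cal{H}(\omega)}$ lets me extract a finite subcover $V_{\zeta_1},\ldots,V_{\zeta_N}$ with uniform constant $\delta = \min_i \delta_{\zeta_i} > 0$. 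Proposition \ref{OpenCover1} then shows that the set $E = \{n \in \bb{N} \mid \pi^{-1}(n) \subseteq \bigcup_i V_{\zeta_i}\}$ is $\omega$-large, hence unbounded. On the fiber $\pi^{-1}(n)$ for $n \in E$, the bound $g_f > \delta$ rescales via the relation $\eps_{\omega} = \eps_n$ to $g_{f_n}(x,y) > \delta(-\log |\Res(f_n)|_n) \geq \delta(-\log |\res(f_n)|_n) - \delta \log 2$, and after absorbing the additive constant into a slightly smaller $\delta$, this contradicts the choice of $f_n$ for $n$ large enough that $n > N$ and $1/n < \delta$.

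The main obstacle is packaging the four ingredients together coherently: preservation of bad reduction under degeneration, Baker's diagonal positivity over non-archimedean algebraically closed fields with bad reduction, lower semi-continuity of $g_f$ on the total space $X^{(2)}$, and the cover-lifting from a single non-principal fiber to an $\omega$-large family of principal fibers. Compared to the archimedean case, the welcome simplification is that $\cal{H}(\omega)$ is automatically non-archimedean, so the only hypothesis that must be transported across the degeneration is bad reduction, which is exactly what Proposition \ref{BadReductionDegeneration1} supplies; no archimedean dichotomy or $\max\{\cdot,1\}$-term is needed.
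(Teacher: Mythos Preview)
Your proposal is correct and follows essentially the same route as the paper: contradiction, non-archimedean degeneration via Section \ref{sec: NonArchimedean}, Proposition \ref{BadReductionDegeneration1} for bad reduction of $f_\omega$, Baker's diagonal positivity, lower semi-continuity (Proposition \ref{ArakelovGreenContinuity1}), compactness of the fiber, and Proposition \ref{OpenCover1} to spread the cover. One small simplification: since $|\Res(f_n)| \leq |\res(f_n)|$ always, the rescaled bound $g_{f_n}(x,y) > \delta(-\log|\Res(f_n)|_n)$ already dominates $\delta(-\log|\res(f_n)|_n)$ directly, so the $-\delta\log 2$ correction and the subsequent absorption step are unnecessary.
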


\begin{proof}
Assume otherwise. Then for each natural number $n$, we have a non-archimedean field $(K_n, | \cdot |_n)$ with a rational map $f_n$ of degree $d \geq 2$ for which there does not exists $n$ opens $U_1,\ldots,U_n$ covering $\berkP_{K_n}$ such that 
$$g_{f_n}(x,y) \geq -\frac{1}{n} \log |\res(f_n)|_n.$$
Take $\eps_n = -\log|\res(f_n)|_n$. Again we may replace $f_n$ by any conjugate and so we may assume that $2^{1/\eps_n} |\Res(f_n)| \geq |\res(f_n)|$. Now we degenerate in Section \ref{sec: NonArchimedean}. We obtain a rational map $f$ of degree $d$ over $A^{\eps}$ such that for any $\omega$, the map $f_{\omega}$ has bad reduction over $\cal{H}(\omega)$. The rest follows similarly to the proof of Theorem \ref{UniformBakerLower1} where we again use \cite[Theorem 3.14]{Bak09} to obtain $N$ open sets $U_i$ covering $\pi^{-1}(\omega)$ and a $\delta > 0$ for which 
$$g_{f_{\omega}}(x,y) > \delta \text{ for all } x,y \in U_i.$$
We then arrive at a contradiction similarly as desired. 
\end{proof}

\section{Proofs of Theorems \ref{IntroUniformBaker1} and \ref{IntroUniformBaker2}}
We now prove our main theorems Theorem \ref{IntroUniformBaker1} and Theorem \ref{IntroUniformBaker2}. 

\begin{theorem} \label{UniformBakerTheorem1}
Let $K$ be a number field and $f: \bb{P}^1 \to \bb{P}^1$ a rational map of degree $d \geq 2$ with $s$ places of bad reduction, where we include all archimedean places. There exists constants $c_1,c_2 > 0$, depending only on $d$ and independent of $f$ and $K$, such that 
$$
\# \left\{ x \in \bb{P}^1(L) \mid \h_f(x) \leq \frac{c_1}{s} \max\{h_{\rat_d}(\langle f \rangle) ,1\}\right\} \leq c_2 s \log(s).
$$
\end{theorem}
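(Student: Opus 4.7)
The plan is to follow Baker's strategy as outlined in the introduction, using Theorems \ref{UniformBakerLower1} and \ref{UniformBakerLower2} as the quantitative replacement for \eqref{eq: IntroBakerLower2}, Proposition \ref{ArakelovSumLowerBound1} for the generic lower bound at each place, and Proposition \ref{GlobalResComparison1} to pass between $h_{\res}(f)$ and $h_{\rat_d}(\langle f \rangle)$. Since these two heights agree up to universal affine constants, it suffices to prove the statement with $\max\{h_{\rat_d}(\langle f \rangle),1\}$ replaced by $A := \max\{h_{\res}(f),1\}$. Fix $N$ points $z_1,\ldots,z_N \in \bb{P}^1(K)$ with $\h_f(z_i) \leq c_1 A/s$ for each $i$, and argue that $N = O_d(s\log s)$.

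Let $N_0, \delta > 0$ be the universal constants furnished by Theorems \ref{UniformBakerLower1} and \ref{UniformBakerLower2}. For each bad place $v$ set
\begin{equation*}
T_v \;=\; \begin{cases} N_v \max\{-\log|\res(f)|_v,\, 1\}, & v \text{ archimedean},\\ -N_v \log|\res(f)|_v, & v \text{ non-archimedean}. \end{cases}
\end{equation*}
Summing over the $s$ bad places, $\sum_{v\text{ bad}} T_v \geq h_{\res}(f) + \sum_{v \mid \infty} N_v \geq c\, A$ for an absolute $c>0$, so by pigeonhole there is a bad place $w$ with $T_w \geq c A/s$. Applying Theorem \ref{UniformBakerLower1} (if $w$ is archimedean) or Theorem \ref{UniformBakerLower2} (if not) to $f$ over $\bb{C}_w := \widehat{\ovl{K_w}}$ produces $N_0$ open sets covering $\bb{P}^1(\bb{C}_w)$ (respectively $\berkP_{\bb{C}_w}$) on each of which $g_{f,w}(x,y) > \delta\, T_w/N_w$. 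A second pigeonhole on the $z_i$ gives $M := \lceil N/N_0 \rceil$ points, relabelled $z_1,\ldots,z_M$, that all lie in a single such open set.

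Apply identity \eqref{eq: GlobalArakelovSum1} to this subset:
\begin{equation*}
\sum_{v \in M_K} N_v \sum_{i \neq j} g_{f,v}(z_i, z_j) \;=\; (M-1)\sum_{i=1}^M \h_f(z_i) \;\leq\; c_1 M(M-1)\, A/s.
\end{equation*}
Lower-bound the left side termwise: the contribution at $w$ is $\geq M(M-1)\, \delta\, T_w \geq c\delta\, M(M-1)\, A/s$; at good places it is $\geq 0$; and at bad $v \neq w$, Proposition \ref{ArakelovSumLowerBound1} gives a contribution $\geq -O_d(M\log M)\cdot N_v(C_v + \log|\res(f)|_v^{-1})$, which sums to $\geq -O_d(M\log M)(1 + h_{\res}(f)) \geq -O_d(M\log M)\, A$ since $A \geq 1$. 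Choosing $c_1 = c\delta/2$ absorbs the RHS contribution into the leading gain, leaving
\begin{equation*}
\tfrac{c\delta}{2}\, M(M-1)\, \frac{A}{s} \;\leq\; O_d(M\log M)\cdot A,
\end{equation*}
whence $M-1 \leq O_d(s \log M)$, that is $M = O_d(s\log s)$, and therefore $N \leq N_0 M = O_d(s \log s)$.

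The main obstacle is the bookkeeping: one must verify that the pigeonhole place $w$ actually carries a $\geq c A/s$ fraction of the quantity we are measuring, which crucially uses the $\max$-with-$1$ in the archimedean version of Theorem \ref{UniformBakerLower1} (so the bound stays non-trivial when $h_{\res}(f)$ is small), and that Proposition \ref{GlobalResComparison1} transfers the $h_{\res}$ estimate faithfully to $h_{\rat_d}$ with universal constants. Once these are in place, the rest is a routine combinatorial assembly of the ingredients already established.
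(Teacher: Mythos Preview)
Your argument is essentially the paper's proof: pigeonhole among the bad places to locate one place $w$ carrying a $\geq cA/s$ share, apply the uniform cover Theorems \ref{UniformBakerLower1}/\ref{UniformBakerLower2} there, pigeonhole the small points into one $U_i$, and balance the gain at $w$ against Proposition \ref{ArakelovSumLowerBound1} at the remaining places inside \eqref{eq: GlobalArakelovSum1}. The paper presents this as an explicit dichotomy on whether $h_{\res}(f)\geq 1$ (choosing in the second case an archimedean $w$ and using the $\max\{1,\cdot\}$ in Theorem \ref{UniformBakerLower1}), whereas your $T_v$ device folds both cases into a single pigeonhole; the content is the same. One small slip: the intermediate inequality $\sum_{v\text{ bad}} T_v \geq h_{\res}(f)+\sum_{v\mid\infty}N_v$ need not hold, since $\max\{a,1\}\not\geq a+1$ in general; what you actually need, and what does hold, is $\sum_v T_v \geq h_{\res}(f)$ (because $\max\{a,1\}\geq a$) together with $\sum_v T_v \geq \sum_{v\mid\infty}N_v$ (because $\max\{a,1\}\geq 1$), hence $\sum_v T_v \geq A$, which suffices for the pigeonhole with $c=1$.
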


\begin{proof}
We assume otherwise that we have $c_2 s \log s$ many points in our set and then will choose $c_2$ accordingly later. We split into two cases depending whether $h_{\res}(f)$ is $\geq 1$ or not. 
\par 
First let's assume that $h_{\res}(f) \geq 1$. Then there exists a place $w$ of $L$ such that 
\begin{equation} \label{eq: Resultant1}
-N_w \log |\res(f)|_w \geq \frac{h_{\res}(f)}{s}.
\end{equation}
By Theorems \ref{UniformBakerLower1} and \ref{UniformBakerLower2}, there exists $N$, depending only on $d$, such that we can cover $\berkP_v$ with $N$ opens $U_i$ for which 
$$g_{f,w}(x,y) > \delta (-\log |\res(f)|_v)$$
for any $x,y$ that lie in the same open $U_i$. Then by pigeonhole, we may find $M = \frac{c_2 (s \log s)}{N}$ many such points that lie in the same open set $U_i$, say $z_1,\ldots,z_M$. Now we have 
\begin{equation} \label{eq: SumArakelov1}
\sum_{v \in M_K} \sum_{i \not = j} N_v g_{f,v}(z_i,z_j) = M \sum_{i=1}^{M} \h_f(z_i) \leq \frac{c_1 M^2}{s} h_{\res}(f).
\end{equation}
On the other hand if we let $M'_K = M_K \setminus v$, by Proposition \ref{ArakelovSumLowerBound1} where we absorb $C_K$ into $h_{\res}(f)$ as we assumed $h_{\res}(f) \geq 1$, we have 
$$\sum_{v \in M'_K} \sum_{i \not = j} N_v g_{f,v}(z_i,z_j) \geq -(h_{\res}(f)) O_d(M \log M).$$
Now adding in the contribution for the place $w$, as $g_{f,w}(z_i,z_j) > \delta (-\log |\res(f)|_v)$, we obtain 

$$\sum_{v \in M_K} \sum_{i \not = j} N_v g_{f,v}(z_i,z_j) \geq (-h_{\res}(f)) O_d(M \log M) + M(M-1) \delta (-\log |\res(f)|_v).$$
Using \eqref{eq: Resultant1}, we get 
$$\sum_{v \in M_L} \sum_{i \not = j} N_v g_{f,v}(z_i,z_j) \geq \left( \frac{M(M-1) \delta}{s} - O_d(M \log M) \right) h_{\res}(f).$$
By \eqref{eq: SumArakelov1}, we get
$$\frac{c_1 M^2}{s} \geq \frac{M(M-1) \delta }{s} - O_d(M \log M).$$
We thus obtain a contradiction if $c_1 \leq \frac{\delta}{2}$ and $M \geq c_3 s \log s$ for some constant $c_3 > 0$ depending only on $d$. This leads to a contradiction if we have more than $c_3 N s \log (s)$ and so we may take $c_2 = c_3 N$ as desired.
\par 
Now if $h_{\res}(f) \leq 1$, we instead pick an arbitrary archimedean place $w$. We apply Theorem \ref{UniformBakerLower1} to again get $N$ opens $U_i$ covering $\bb{P}^1(\bb{C})$ such that 
$$g_{f,w}(x,y) > \delta.$$
We then similarly get 
$$\sum_{v \in M_L} \sum_{i \not = j} N_v g_{f,v}(z_i,z_j) \geq \left( \frac{M(M-1) \delta}{[K:\bb{Q}]} - O_d(M \log M) \right) h_{\res}(f)$$
which again by \eqref{eq: SumArakelov1} gives us
$$\frac{c_1 M^2}{s} \geq \frac{M(M-1)\delta}{2s}  - O_d(M \log M)$$
as $2s \geq [K:\bb{Q}]$ since we included all archimedean places. Again for $c_1 \leq \frac{\delta}{2}$, we obtain a contradiction for $M$ sufficiently large depending only on $d$ as desired. 

\end{proof}

By Proposition \ref{GlobalResComparison1}, we may replace $h_{\res}(f)$ with $h_{\rat_d}( \langle f \rangle)$ in Theorem \ref{UniformBakerTheorem1} which gives us Theorem \ref{IntroUniformBaker1}. This immediately implies there are at most $c_2 s \log(s)$ many preperiodic points in $K$, which generalizes Benedetto's result \cite{Ben07} for polynomials.  As a simple corollary, we obtain a lower bound on the canonical height of a non-preperiodic point which generalizes Looper's result \cite{Loo19} for polynomials. 

\begin{theorem} \label{UniformCanonicalHeight1}
Let $K$ be a number field and $f: \bb{P}^1 \to \bb{P}^1$ a rational map of degree $d \geq 2$ with $s$ places of bad reduction. Then there exists a constant $c_3 > 0$, depending only on $d$ and independent of $K$ and $f$, such that for $x \in \bb{P}^1(K)$, we have 
$$\h_f(x) = 0 \text{ or } \h_f(x) \geq \frac{1}{d^{c_3 s \log s}} \max\{1, h_{\rat_d}(\langle f \rangle)\}.$$
\end{theorem}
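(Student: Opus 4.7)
The plan is to deduce Theorem \ref{UniformCanonicalHeight1} from Theorem \ref{UniformBakerTheorem1} via a direct forward-orbit/pigeonhole argument, exploiting the functional equation $\h_f(f(x)) = d \h_f(x)$. Since $f$ is defined over $K$, the entire forward orbit of any $x \in \bb{P}^1(K)$ remains in $\bb{P}^1(K)$, so we may apply the counting bound on $K$-points of small canonical height to orbits.

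Fix $x \in \bb{P}^1(K)$ with $\h_f(x) > 0$. Then $x$ is not $f$-preperiodic, so the iterates $x, f(x), f^2(x), \ldots, f^{N-1}(x)$ are pairwise distinct points of $\bb{P}^1(K)$ for every $N \geq 1$; indeed, if $f^i(x) = f^j(x)$ with $i < j$, then $x$ would be preperiodic, forcing $\h_f(x) = 0$. Let $c_1, c_2 > 0$ be the constants produced by Theorem \ref{UniformBakerTheorem1}, and set
$$
N = \lceil c_2 s \log s \rceil + 1.
$$
By the counting bound, the set of points $y \in \bb{P}^1(K)$ with $\h_f(y) \leq \frac{c_1}{s} \max\{h_{\rat_d}(\langle f \rangle), 1\}$ has cardinality at most $c_2 s \log s < N$. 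Hence at least one iterate $f^k(x)$ with $0 \leq k \leq N-1$ must satisfy
$$
\h_f(f^k(x)) > \frac{c_1}{s} \max\{1, h_{\rat_d}(\langle f \rangle)\}.
$$

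Applying the functional equation $\h_f(f^k(x)) = d^k \h_f(x)$ and using $k \leq N-1$, we conclude
$$
\h_f(x) \geq \frac{1}{d^k} \cdot \frac{c_1}{s} \max\{1, h_{\rat_d}(\langle f \rangle)\} \geq \frac{c_1}{s \, d^{N-1}} \max\{1, h_{\rat_d}(\langle f \rangle)\}.
$$
It remains to absorb the factor $\frac{c_1}{s}$ and the $-1$ in the exponent into a single $d^{-c_3 s \log s}$ bound. Since $d \geq 2$, we have $\frac{c_1}{s} \geq d^{-\log_d(s/c_1)}$, and hence
$$
\frac{c_1}{s \, d^{N-1}} \geq d^{-(N-1+\log_d(s/c_1))}.
$$
Because $N - 1 \leq c_2 s \log s + 1$ and $\log_d(s/c_1) = O(\log s)$, one can choose $c_3 > 0$ depending only on $d$ (since $c_1, c_2$ depend only on $d$) so that $N - 1 + \log_d(s/c_1) \leq c_3 s \log s$ for all $s \geq 1$, yielding the desired lower bound.

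There is no real obstacle beyond bookkeeping; the only mild subtlety is handling small $s$ (say $s \log s = 0$ when $s = 1$), which is absorbed by adjusting $c_3$ so the inequality is vacuous or trivial in that range, and ensuring the $\max\{1, h_{\rat_d}(\langle f \rangle)\}$ on the right hand side of Theorem \ref{UniformBakerTheorem1} is what propagates through — which it does by construction.
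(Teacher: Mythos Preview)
Your proof is correct and follows essentially the same approach as the paper: both use the forward orbit $x, f(x), \ldots$ together with the functional equation $\h_f(f^k(x)) = d^k \h_f(x)$ and the counting bound of Theorem \ref{UniformBakerTheorem1}. The paper phrases it by contradiction (if $\h_f(x)$ were too small then too many iterates would lie in the small-height set), whereas you phrase it directly via pigeonhole (one of the first $N$ iterates must escape the small-height set); these are logically equivalent, and your version is in fact slightly more careful with the constant bookkeeping.
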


\begin{proof}
Assume otherwise that we have a $x$ satisfying 
$$\h_f(x) \leq \frac{1}{d^{c_3 s \log s}} \max\{1,h_{\rat_d}(\langle f \rangle)\}.$$
Then $x,f(x),\ldots,f^n(x)$ for $n \leq \frac{c_3}{2} s \log(s)$ all satisfy
$$\h_f(f^i(x)) \leq \frac{c_1}{s} \max\{1, h_{\rat_d}(\langle f \rangle)\}$$
if $\frac{c_3}{2} \geq c_1$. This contradicts Theorem \ref{UniformBakerTheorem1}. 
\end{proof}

This answers a conjecture of Silverman \cite[Conjecture 4.98]{Sil07} under the assumption that $f$ has at most $s$ many places of bad reduction. 
\par 
For function fields, we obtain an analogous result.

\begin{theorem} \label{UniformBakerTheorem2}
Let $K$ be a function field and $L$ a finite extension. Let $f: \bb{P}^1 \to \bb{P}^1$ be a rational map of degree $d$ defined over $L$ with $s$ places of bad reduction. Assume that $f$ is not isotrivial. Then there exists $c_1,c_2 > 0$, depending only on $d$, such that 
$$\# \left\{ x \in \bb{P}^1(L) \text{ with } \h_{f}(x)\leq \frac{c_1}{s} h_{\res}(f)  \right\} \leq  c_2 (s \log s).$$
\end{theorem}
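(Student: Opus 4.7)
The proof will follow the same structure as Theorem \ref{UniformBakerTheorem1}, but is somewhat simpler because every place of a function field is non-archimedean, so we do not need to split into cases depending on whether $h_{\res}(f) \geq 1$. The key input is that non-isotriviality guarantees $h_{\res}(f) > 0$: indeed, if $f$ is non-isotrivial then it must have bad reduction at some place $v \in M_L$, which contributes strictly positively to $h_{\res}(f)$.

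Assume for contradiction that we have $M = c_2(s \log s)/N$ points $z_1,\ldots,z_M \in \bb{P}^1(L)$ with $\h_f(z_i) \leq \frac{c_1}{s} h_{\res}(f)$, where $N$ is the constant from Theorem \ref{UniformBakerLower2} and $c_2$ will be chosen later. By the pigeonhole principle applied to the places of bad reduction, there exists a place $w \in M_L$ at which $f$ has bad reduction and
$$-N_w \log|\res(f)|_w \geq \frac{h_{\res}(f)}{s}.$$
By Theorem \ref{UniformBakerLower2} there exist $N$ open sets $U_1,\ldots,U_N$ covering $\berkP_{L_w}$ (where $N$ depends only on $d$) such that $g_{f,w}(x,y) > \delta(-\log|\res(f)|_w)$ for all $x,y$ in the same $U_i$. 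By pigeonhole applied to our $c_2(s \log s)$ points, we may assume after relabeling that $z_1,\ldots,z_M$ all lie in a common $U_i$.

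Now I would combine the global identity
$$\sum_{v \in M_L} \sum_{i \not = j} N_v g_{f,v}(z_i,z_j) = (M-1) \sum_{i=1}^{M} \h_f(z_i) \leq \frac{c_1 M(M-1)}{s} h_{\res}(f)$$
with the lower bound from Proposition \ref{ArakelovSumLowerBound2} applied to $M'_L = M_L \setminus \{w\}$ (where the constant $C_K = 0$ for function fields):
$$\sum_{v \in M'_L} \sum_{i \not = j} N_v g_{f,v}(z_i,z_j) \geq -h_{\res}(f) \cdot O_d(M \log M).$$
Adding the contribution at $w$, which is bounded below by $M(M-1) N_w \delta (-\log|\res(f)|_w) \geq \frac{M(M-1)\delta}{s} h_{\res}(f)$, we obtain
$$\frac{c_1 M(M-1)}{s} h_{\res}(f) \geq \frac{M(M-1)\delta}{s} h_{\res}(f) - h_{\res}(f) \cdot O_d(M \log M).$$
Dividing by $h_{\res}(f) > 0$ and rearranging, we reach a contradiction provided $c_1 \leq \delta/2$ and $M \geq c_3 s \log s$ for a constant $c_3$ depending only on $d$. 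This is ensured by taking $c_2 = c_3 N$.

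The one step that requires a moment of care is confirming that the division by $h_{\res}(f)$ is valid, i.e. that $h_{\res}(f) > 0$; this is exactly where non-isotriviality of $f$ is used. Everything else is a direct transcription of the number field argument, with the simplification that the auxiliary case $h_{\res}(f) \leq 1$ (which required picking a separate archimedean place to leverage the $\max\{1, -\log|\res(f)|\}$ bound from Theorem \ref{UniformBakerLower1}) does not arise here.
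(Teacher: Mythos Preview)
Your proposal is correct and follows essentially the same approach as the paper's own proof. The paper's proof is even more terse, simply stating that the argument is identical to the $h_{\res}(f)\geq 1$ case of Theorem~\ref{UniformBakerTheorem1} and invoking Baker's result \cite[Theorem 1.9]{Bak09} to justify that non-isotriviality implies $h_{\res}(f)>0$; you have spelled out the details and correctly identified that $C_K=0$ in Proposition~\ref{ArakelovSumLowerBound2} eliminates the need for a case split.
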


\begin{proof}
The proof of this is exactly the same as the proof of Theorem \ref{UniformBakerTheorem1} in the case where $h_{\res}(f) \geq 1$. As $f$ is not isotrivial, by \cite[Theorem 1.9]{Bak09} $f$ has at least one place of bad reduction and so that $h_{\res}(f) > 0$. Then there exists a place $w$ of $L$ such that 
$$-N_w \log |\res(f)|_w \geq \frac{h_{\res}(f)}{s}.$$
The rest follows similarly as in Theorem \ref{UniformBakerTheorem1}.
\end{proof}

\printbibliography

\end{document}